\newtheorem{theorem}{Theorem}
\newtheorem{proposition}[theorem]{Proposition}
\newtheorem{lemma}[theorem]{Lemma}
\newtheorem{corollary}[theorem]{Corollary}
\newtheorem{remark}[theorem]{Remark}
\numberwithin{theorem}{section}
\theoremstyle{definition}
\newtheorem{definition}[theorem]{Definition}
\newtheorem{example}[theorem]{Example}
\begin{document}

\title[Racah problems for the oscillator algebra]{Racah problems for the oscillator algebra,\\the Lie algebra $\mathfrak{sl}_n$,\\ and multivariate Krawtchouk polynomials}
\author[N. Cramp\'e]{Nicolas Cramp\'e}
\address{Institut Denis-Poisson CNRS/UMR 7013 - Universit\'e de Tours - Universit\'e d'Orl\'eans, 
Parc de Grammont, 37200 Tours, France.}
\email{ crampe1977@gmail.com  }

\author[W. van de Vijver]{Wouter van de Vijver}
\address{Department of Electronics and Information Systems, Faculty of Engineering and Architecture, Ghent University, Building S8, Krijgslaan 281, 9000 Ghent, Belgium}
\email{Wouter.vandeVijver@UGent.be}

\author[L. Vinet]{Luc Vinet}
\address{Centre de Recherches Math\'ematiques, Universit\'e de Montr\'eal, P.O. Box 6128, Centre-ville Station, Montr\'eal, QC H3C 3J7, Canada}
\email{vinet@crm.umontreal.ca}

\date{\today}

\begin{abstract}
 The  oscillator Racah algebra $\mathcal{R}_n(\mathfrak{h})$ is realized by the intermediate Casimir operators arising in the multifold tensor product of the oscillator algebra $\mathfrak{h}$.   
 An embedding of the Lie algebra $\mathfrak{sl}_{n-1}$ into $\mathcal{R}_n(\mathfrak{h})$ is presented. 
It relates the representation theory of the two algebras. We establish the connection between recoupling coefficients for $\mathfrak{h}$ and 
matrix elements of $\mathfrak{sl}_n$-representations which are both expressed in terms of multivariate Krawtchouk polynomials of Griffiths type.
 
\end{abstract}

\maketitle

\tableofcontents

\section{Introduction}
This paper studies the oscillator Racah algebra  $\mathcal{R}_n(\mathfrak{h})$ viewed as the centralizer of the diagonal action of the oscillator 
algebra $\mathfrak{h}$ \cite{Streater} in the n-fold tensor product of its universal algebra. We shall find that it admits an embedding of $\mathfrak{sl}_{n-1}$. 
Building upon that result, we shall connect the facts that the multivariate Krawtchouk polynomials of Griffiths arise as $3(n-1)j$ symbols of $\mathfrak{h}$ 
as well as matrix elements of the restriction to the group $\textup{O}(n+1)$ of the symmetric representations of $\textup{SU}(n+1)$.

 There is growing interest in Racah algebras. These are, in particular, identified in the framework of Racah problems where one looks at the recouplings of 
 tensor products of certain Lie algebras. We shall denote by $n$ the number of factors. The cases with $n=3$ for the Lie algebra 
 $\mathfrak{su}(2)$ (or $\mathfrak{su}(1,1)$), the quantum algebra $U_q(\mathfrak{sl}_2)$ and the Lie superalgebra $\mathfrak{osp}(1|2)$ have first been examined. 
 They have led respectively to the (universal versions of the) Racah algebra $R(3)$ \cite{Granovskii&Zhedanov-1988,GVZ,Genest&Vinet&Zhedanov-2014}
 the Askey-Wilson algebra $AW(3)$ \cite{GZ,H-WH} and the Bannai-Ito algebra $BI(3)$ \cite{GVZ2}. In this picture, where there is an implicit map from the abstract
 Racah algebra onto the centralizer of the diagonal action of say, $\mathfrak{su}(2)$, $U_q(\mathfrak{sl}_2)$ or $\mathfrak{osp}(1|2)$ on their triple product, the images 
 of the three generators of the Racah algebra are expressed in terms of the intermediate Casimir elements. The representations of these algebras encompass the bispectral 
 properties of the orthogonal polynomials bearing the same name that are essentially the Racah  or 6j- coefficients of the corresponding algebras whose triple tensor products 
 are considered. In fact this is how the $AW(3)$ was first identified \cite{Z91} through its realization in terms of the recurrence and q-difference operators of the Askey-Wilson polynomials. 
 These Racah algebras have arisen in numerous contexts. They have appeared as symmetry algebras of superintegrable models \cite{GVZ3,DGTVZ}, are featuring centrally in aspects of algebraic
 combinatorics \cite{TZ} and are related to the Leonard pairs \cite{TV}. They have been related to Double Affine Hecke Algebras (DAHA) \cite{Koorn1,Koorn2,T2} and degenerate cases \cite{GVZ4}. 
 Algebras over three strands such as the Temperley-Lieb  or Brauer ones that arise in Schur-Weyl duality have been shown to be quotients of Racah algebras \cite{CPV,CFV}. 
 Isomorphisms with certain Kauffman-Skein algebras have been established \cite{BP,C}. Howe duality could be used to relate different presentations \cite{GVVZ,GVVZ2,FGRV}. 
 Truncated reflection algebras attached to $U_q(\hat{\mathfrak{sl}}_2)$, to loop algebra or to the Yangian of $sl_2$ have also been found \cite{Bas,BasC,CRVZ} to lead to $AW(3)$ or Racah algebras. 
 Finally, the description of the Bannai-Ito and Askey-Wilson algebras 
 has been cast recently in the framework of the universal $R$-matrix \cite{CVZ1,CVZ2}. This offers sufficient cause already to warrant the exploration of the Racah algebra associated to the oscillator algebra.
 
 The study of Racah algebras as centralizers of $n$-fold tensor products with $n$ larger than $3$ has been pursued \cite{DeBie&Genest$vandeVijver&Vinet,DGV,DV,PW}. 
 The recoupling coefficients in these instances are orthogonal polynomials in many variables. In the case of the generalized Racah algebra for example, bases for representations are obtained 
 by diagonalizing the generators of different maximal Abelian subalgebras \cite{DeBie&Genest$vandeVijver&Vinet} and the connection coefficients between two such bases are given in terms of 
 multivariate Racah polynomials of the Tratnik type \cite{Tratnik-1991,GI}. Given that the Racah polynomials sit at the top of the finite part of the 
 q=1 Askey scheme \cite{Koekoek,Koekoek&Lesky&Swarttouw-2010}, these Tratnik polynomials provide, through specializations and limits, multivariable extensions of all the finite 
 families of orthogonal polynomials in parallel with what occurs in the univariate situation.
 
 We shall here examine the oscillator Racah algebra $\mathcal{R}_n(\mathfrak{h})$ for arbitrary $n$ thereby exploring the structure that encodes the properties of the $3nj$-symbols of the 
 oscillator algebra $\mathfrak{h}$. As will be seen, these are given in terms of multivariate Krawtchouk polynomials. Historically, the $3j$- and $6j$- coefficients were 
 obtained in \cite{Vanderjeugt,Klimyk&Vilenkin} and found to be given both in terms of univariate Krawtchouk polynomials. Looking at the $9j$- symbols of $\mathfrak{h}$, Zhedanov 
 made the observation  \cite{Zhedanov} that these involve polynomials in two variables orthogonal with respect to the trinomial distribution and depending on one more parameter than the 
 Tratnik ones. We shall indicate below how this generalizes.
 
 Regarding multivariate Krawtchouk polynomials, it is worth recalling and clarifying the following points. Polynomials in $n$ variables that are orthogonal with respect to the multinomial 
 distribution were introduced by Griffiths in 1971 using a generating function \cite{Griffiths-1971}; these polynomials involve $\frac{1}{2}n(n-1)$ parameters. For a review see \cite{DG}.
 The specialization to the Krawtchouk family of the Racah polynomials in $n$ variables introduced by Tratnik in 1991 and mentioned before, yields polynomials also orthogonal with respect 
 to the multinomial distribution but depending in this case on $n$ parameters only (in addition to the maximal degree $N$) \cite{GI}. These two sets are hence not the same and their 
 relation remained unclear for some time largely because of their intricate parametrizations. The bivariate Krawtchouk polynomials of Griffiths were rediscovered in 2008 in connection 
 with a probabilistic model and as limits of the $9j$- symbols of $\mathfrak{su}(2)$ \cite{HR}; they were called Rahman polynomials for a while. Slightly before, Mizukawa and Tanaka \cite{MT} 
 had related the Griffiths polynomials to character algebras and provided an explicit formula in terms of Gel'fand-Aomoto hypergeometric series.
 
 Of special relevance to the present article is the group theoretical interpretation of the multivariable Krawtchouk polynomials of Griffiths that was given by Genest, Vinet and Zhedanov 
 in \cite{GVZ5} where they observe that these polynomials arise in the matrix elements of the representations of the orthogonal group $\textup{O}(n+1)$ that act on the energy eigenspaces of the 
 isotropic $(n+1)-$dimensional harmonic oscillator. In other words, they have shown that the matrix elements of the restriction to $\textup{O}(n+1)$ of the symmetric representations 
 of $\textup{SU}(n+1)$ are expressed in terms of the Krawtchouk polynomials of Griffiths; the parameters of the polynomials are thus interpreted as the $\frac{1}{2}n(n-1)$ parameters, 
 for instance the Euler angles, that specify rotations in $(n+1)$ dimensions. This cogent picture has allowed for a complete characterization on algebraic grounds of the Griffiths polynomials 
 (recurrence relations, difference equations, generating function etc.) using the covariance properties of the oscillator creation and annihilation operators under $\textup{O}(n+1)$. Furthermore 
 this approach clarified the connection between the Griffiths and Tratnik classes of Krawtchouk polynomials by making explicit that the latter is simply a special case of the former. 
 For example, in the bivariate case ($n=3$), while the Griffiths polynomials with $3$ parameters correspond to a general rotation in three dimensions, the Tratnik ones with $2$ parameters, 
 arise from rotations that are only products of two planar rotations about perpendicular axes. Related to this group theoretical interpretation is the work of Iliev and 
 Terwilliger \cite{Iliev&Terwilliger,Iliev-2012} (see also \cite{R}) where the Krawtchouk polynomials appear as overlap coefficients between basis elements for two 
 modules of $\mathfrak{sl}_{n+1}(\mathbb C)$ with the basis elements for the representation spaces defined as eigenvectors of two Cartan subalgebras related by an anti-automorphism specified by the parameters.
 
 The embedding of $\mathfrak{sl}_{n-1}$ into $\mathcal{R}_n(\mathfrak{h})$ that we shall construct will provide, besides its intrinsic algebraic interest, a connection between 
 these two manifestations of the multivariate Krawtchouk polynomials of Griffiths in matrix elements of representations and in recoupling coefficients. \\

The paper is organized as follows. In section \ref{sectiondef} we introduce the Racah algebra for the oscillator algebra. We also exhibit some properties and 
find a number of commutation relations that are needed to prove the main theorem of this paper given in the following section. 
In section \ref{mainsection} we show how to embed the special linear Lie algebra $\mathfrak{sl}_{n-1}$ into the Racah algebra for the oscillator algebra. We then 
study Abelian subalgebras of the Racah algebra for the oscillator algebra related to Cartan algebras of the special linear algebra. These are called labelling 
Abelian algebras and will be the main tool for section \ref{representationtheory}. In this section we connect the representation theories of $\mathfrak{sl}_{n-1}$ and of the Racah algebra for 
the oscillator algebra. We show how multivariate Krawtchouk polynomials both of Tratnik type and of Griffiths type appear as overlap 
oefficients between bases of irreducible representations diagonalized by the labelling Abelian algebras. We focus briefly on the relation with the $6j$- and $9j$-symbols. 
We finish this section by constructing for a number of overlaps an isomorphism of $\mathfrak{sl}_{n-1}$ and the corresponding rotation matrix. 
We explain the link of these rotation matrices with the multivariate Krawtchouk polynomials of Griffiths type.  A brief conclusion follows. Appendix \ref{Overlap} records for reference how the overlaps between representation eigenbases associated to equivalent $\mathfrak{sl}_2$ Cartan generators are obtained in terms of univariate Krawtchouk polynomials.

\section{The oscillator algebra $\mathfrak{h}$ and the Racah algebra}\label{sectiondef}

The oscillator algebra $\mathfrak{h}$ is the Lie algebra generated by four elements $A_{\pm}$, $A_0$ and a central element $a$ with following defining relations:
\begin{equation}\label{Oscillator}
[A_-,A_+]=a, \qquad [A_0, A_\pm]=\pm A_\pm.
\end{equation}
The Casimir element $Q$ is contained in the universal enveloping algebra $\mathcal{U}(\mathfrak{h})$ and is given by:
\begin{equation}\label{Casimir}
	Q:=aA_0-A_+A_-.
\end{equation}
We define the elements of $\mathcal{U}(\mathfrak{h})^{\otimes n}$ for $1\leq k  \leq n$
\begin{align}
A_{0,k}&:= 1^{\otimes (k-1)} \otimes A_0 \otimes  1^{\otimes (n-k)},\quad \\
A_{\pm,k}&:=1^{\otimes (k-1)} \otimes A_\pm \otimes  1^{\otimes (n-k)},\quad  \\
a_k&:=1^{\otimes (k-1)} \otimes a \otimes  1^{\otimes (n-k)}\label{eq:ai}.
\end{align}
and for any subset non-empty $K \subset [n]:=\{1,\ldots, n \}$
\begin{equation}
A_{0,K}:=\sum_{k\in K} A_{0,k}, \quad A_{\pm,K}:=\sum_{k\in K} A_{\pm,k}, \quad a_K:=\sum_{k\in K} a_{k}.\label{aK}
\end{equation}
We denote the Lie algebra (isomorphic to $\mathfrak{h}$) generated by the operators $A_{0,K}$, $A_{\pm,K}$ and $a_K$ by $\mathfrak{h}_K$. The Casimir element of this algebra is $Q_K$:
\begin{align}
\label{Casimir-Upper}
Q_K:=a_KA_{0,K}-A_{+,K}A_{-,K}.
\end{align}
 The operators $Q_K$ will define the algebra of interest of this article. 
\begin{definition}
We define the oscillator Racah algebra $\mathcal{R}_n(\mathfrak{h})$ to be the subalgebra of $\mathcal{U}(\mathfrak{h})^{\otimes n}$  generated by the elements of the set $\{ Q_K\,|\, K\subset [n] \text{ and }  K \neq \emptyset \}$.
\end{definition}
\begin{example}
The easiest non-trivial example is given for $n=3$. Then a generator is constructed for every non-empty $K \in [3]=\{1,2,3\}$. The set of generators are given by
\[
\{ Q_{1},Q_{2},Q_{3},Q_{12},Q_{13},Q_{23},Q_{123}\}.
\]
\end{example}
\begin{proposition}
The following relations hold in $\mathcal{U}(\mathfrak{h})^{\otimes n}$, for any non-empty $K \subset [n]$:
\[
[ Q_K, A_{ 0, [n] } ]=0, \qquad  [ Q_K, A_{ \pm, [n] } ]=0, \qquad [ Q_K, a_{ [n] } ]=0.
\]
Then $\mathcal{R}_n(\mathfrak{h})$ generates a subalgebra of the centralizer of the oscillator algebra $\mathfrak{h}_{[n]}$ in  $\mathcal{U}(\mathfrak{h})^{\otimes n}$.
\end{proposition}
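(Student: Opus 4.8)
The plan is to reduce everything to two elementary observations: that $Q_K$ is the Casimir element of a copy of $\mathfrak{h}$, and that operators acting on disjoint tensor factors commute. The central relation $[Q_K,a_{[n]}]=0$ is immediate: since $a$ is central in $\mathcal{U}(\mathfrak{h})$, each $a_k$ is central in $\mathcal{U}(\mathfrak{h})^{\otimes n}$, hence so is $a_{[n]}=\sum_k a_k$, and it commutes with every element of the algebra, $Q_K$ in particular.

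For the remaining two relations I would split the index set as $[n]=K\sqcup([n]\setminus K)$ and write $A_{0,[n]}=A_{0,K}+A_{0,[n]\setminus K}$ and $A_{\pm,[n]}=A_{\pm,K}+A_{\pm,[n]\setminus K}$. The generator $Q_K=a_KA_{0,K}-A_{+,K}A_{-,K}$ is built only from the operators $a_k,A_{0,k},A_{\pm,k}$ with $k\in K$, so it acts trivially on every tensor factor indexed by $[n]\setminus K$. Consequently it commutes with $A_{0,[n]\setminus K}$ and $A_{\pm,[n]\setminus K}$, which act trivially on the factors indexed by $K$; this is the standard fact that elements supported on disjoint tensor slots commute.

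It then remains to show $[Q_K,A_{0,K}]=0$ and $[Q_K,A_{\pm,K}]=0$. For this I would first verify that $\mathfrak{h}_K$ is indeed isomorphic to $\mathfrak{h}$: in computing $[A_{-,K},A_{+,K}]$ and $[A_{0,K},A_{\pm,K}]$, the cross terms with distinct indices $j\neq k$ vanish by disjointness, leaving only the diagonal terms, which reproduce the defining relations \eqref{Oscillator} with central element $a_K$. Since $Q_K$ is the image of the Casimir \eqref{Casimir} under this isomorphism, the desired vanishing follows from the single computation that $Q=aA_0-A_+A_-$ commutes with $A_0$, $A_\pm$ and $a$ in $\mathcal{U}(\mathfrak{h})$, a short direct check using \eqref{Oscillator}.

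Combining the two contributions gives $[Q_K,A_{0,[n]}]=[Q_K,A_{0,K}]+[Q_K,A_{0,[n]\setminus K}]=0$, and likewise for $A_{\pm,[n]}$. Finally, since $A_{0,[n]}$, $A_{\pm,[n]}$ and $a_{[n]}$ generate $\mathfrak{h}_{[n]}$ and since $\mathcal{R}_n(\mathfrak{h})$ is generated by the $Q_K$, every element of $\mathcal{R}_n(\mathfrak{h})$ commutes with $\mathfrak{h}_{[n]}$, so $\mathcal{R}_n(\mathfrak{h})$ sits inside the centralizer. No step presents a genuine obstacle; the only point requiring care is the bookkeeping of which tensor factors overlap, so that the split into a $K$-part (handled by the Casimir property) and a complementary part (handled by disjointness) is clean.
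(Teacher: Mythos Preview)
Your argument is correct. The paper actually states this proposition without proof, treating it as an elementary consequence of the definitions; your write-up supplies precisely the routine details the authors omit---splitting $A_{\bullet,[n]}$ into its $K$-part and its $[n]\setminus K$-part, handling the former via the Casimir property of $Q_K$ in $\mathfrak{h}_K$ and the latter via disjointness of tensor supports.
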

We wish to find the defining commutation relations obeyed by the generators $Q_K$ of $\mathcal{R}_n(\mathfrak{h})$.  First we want to point out this lemma:
\begin{lemma}\label{Racah1}
Let $\{K_p\}_{p=1\ldots k}$ be a set of $k$ disjoint subsets of $[n]$. Define $K_B:=\cup_{q \in B} K_q$ with $B\subset [k]$. Consider the following map
\[  \theta: \mathcal{R}_k(\mathfrak{h}) \rightarrow \mathcal{R}_n(\mathfrak{h}) :Q_{B} \mapsto Q_{K_B}. \]
This is an injective morphism. We denote its image by $\mathcal{R}_k^{K_1,\ldots, K_k}(\mathfrak{h})$. This algebra is isomorphic to $\mathcal{R}_k(\mathfrak{h})$.
\end{lemma}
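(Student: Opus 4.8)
The plan is to exhibit $\theta$ as the restriction of a genuine algebra homomorphism defined on all of $\mathcal{U}(\mathfrak{h})^{\otimes k}$, and then to read off both the morphism property and injectivity from properties of that larger map. For each $p\in[k]$ the assignment $A_0\mapsto A_{0,K_p}$, $A_\pm\mapsto A_{\pm,K_p}$, $a\mapsto a_{K_p}$ defines a Lie algebra homomorphism $\iota_p:\mathfrak{h}\to\mathcal{U}(\mathfrak{h})^{\otimes n}$; indeed this is exactly the statement that $\mathfrak{h}_{K_p}\cong\mathfrak{h}$ recorded above. Because the $K_p$ are pairwise disjoint, the images of $\iota_p$ and $\iota_{p'}$ are supported on disjoint tensor factors for $p\neq p'$ and hence commute. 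First I would assemble these into a single Lie algebra homomorphism $\mathfrak{h}^{\oplus k}\to\mathcal{U}(\mathfrak{h})^{\otimes n}$ and extend it, using the universal property of the enveloping algebra together with the identification $\mathcal{U}(\mathfrak{h})^{\otimes k}\cong\mathcal{U}(\mathfrak{h}^{\oplus k})$, to an algebra homomorphism $\Phi:\mathcal{U}(\mathfrak{h})^{\otimes k}\to\mathcal{U}(\mathfrak{h})^{\otimes n}$.

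Next I would check the action of $\Phi$ on the generators. Since $\Phi$ is linear and carries the $q$-th copy of each generator to its $K_q$-counterpart, one gets $\Phi(a_B)=a_{K_B}$, $\Phi(A_{0,B})=A_{0,K_B}$ and $\Phi(A_{\pm,B})=A_{\pm,K_B}$ for every $B\subset[k]$, where disjointness is used to identify $\cup_{q\in B}K_q=K_B$. As $\Phi$ is an algebra homomorphism it then follows that $\Phi(Q_B)=a_{K_B}A_{0,K_B}-A_{+,K_B}A_{-,K_B}=Q_{K_B}$. Hence $\Phi$ sends the generators of $\mathcal{R}_k(\mathfrak{h})$ into $\mathcal{R}_n(\mathfrak{h})$, its restriction to $\mathcal{R}_k(\mathfrak{h})$ lands in $\mathcal{R}_n(\mathfrak{h})$, and this restriction agrees with $\theta$ on generators. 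This simultaneously shows that $\theta$ is well defined and that it is an algebra homomorphism.

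The main work is to establish injectivity, which I would deduce from injectivity of $\Phi$ itself. The point is that $\iota_p$ is simply the diagonal embedding of $\mathfrak{h}$ into the factors indexed by $K_p$, so at the level of enveloping algebras $\Phi$ is, after inserting units in the factors belonging to no $K_p$, the tensor product of the iterated coproducts $\Delta^{(|K_p|)}:\mathcal{U}(\mathfrak{h})\to\mathcal{U}(\mathfrak{h})^{\otimes|K_p|}$. Each iterated coproduct is injective because the counit supplies a left inverse, $(\mathrm{id}\otimes\epsilon\otimes\cdots\otimes\epsilon)\circ\Delta^{(m)}=\mathrm{id}$, and the insertion of units in the unused factors is likewise injective. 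Over a field a tensor product of injective linear maps is injective, so $\Phi$ is injective and therefore so is its restriction $\theta$. Being an injective algebra homomorphism, $\theta$ restricts to an isomorphism onto its image $\mathcal{R}_k^{K_1,\ldots,K_k}(\mathfrak{h})$, which is the final claim. The only delicate point is the injectivity of the coproduct and its stability under tensor products; everything else is routine bookkeeping of disjoint supports.
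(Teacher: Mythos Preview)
Your proposal is correct and follows essentially the same route as the paper: both build an algebra homomorphism $\mathcal{U}(\mathfrak{h})^{\otimes k}\to\mathcal{U}(\mathfrak{h})^{\otimes n}$ by sending the $p$-th tensor slot onto $\mathfrak{h}_{K_p}$ and then restrict to $\mathcal{R}_k(\mathfrak{h})$. Your version is more explicit on injectivity, supplying the counit-splitting argument for the iterated coproduct where the paper simply asserts the isomorphism $\mathcal{U}(\mathfrak{h})^{\otimes k}\cong\langle\mathfrak{h}_{K_p}\rangle_{p\in[k]}$.
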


\begin{example}
Consider a partition of the set $\{1,2,3,4 \}$. For example take $K_1=\{1\}$, $K_2=\{2,4\}$ and $K_3=\{3\}$. 
Then we have the following injective morphism of $\mathcal{R}_3(\mathfrak{h})$ into $\mathcal{R}_4(\mathfrak{h})$:
\begin{align*}
 &\theta(Q_{1})= Q_{K_1}=Q_{1},& \theta(Q_{12})&=Q_{K_1K_2}=Q_{124},\\
 &\theta(Q_{2})= Q_{K_2}=Q_{24},& \theta(Q_{13})&=Q_{K_1K_3}=Q_{13},\\
 &\theta(Q_{3})= Q_{K_3}=Q_3,& \theta(Q_{23})&=Q_{K_2K_3}=Q_{234}, \\
 &\theta(Q_{123})=Q_{K_1K_2K_3}=Q_{1234}. & &
\end{align*}
Here we introduced the shortened notation $KL:=K \cup L $ for sets $K$ and $L$.
\end{example}

\begin{proof}

We repeat the strategy in \cite[section 4.2]{DeBie&Genest$vandeVijver&Vinet} and generalize to the $n$-fold tensor product space. In formulas (\ref{aK}) 
we constructed an algebra $\mathfrak{h}_K$ isomorphic to $\mathfrak{h}$ acting on the components of tensor product whose indices are in $K$.
Consider the algebra generated by the union of $\mathfrak{h}_{K_p}$.
\[
  \mathcal{U}(\mathfrak{h})^{\otimes k} \cong \langle \, \mathfrak{h}_{K_p}  \rangle_{p \in [k]}.
\]
The isomorphism is defined on the generators by $ 1^{\otimes( p-1)} \otimes X \otimes 1^{\otimes (k-p)} \rightarrow X_{K_p}$ 
where $X$ is one of the generators $A_\pm$, $A_0$ or $a$. Inside the algebra $\mathcal{U}( \mathfrak{h})^{\otimes k}$ we find $\mathcal{R}_k(\mathfrak{h})$ generated 
by the operators $Q_B:=Q_{i_1 i_2 \dots i_l}$ with $B \subset [k]$. Their images under this isomorphism are $Q_{K_B}:=Q_{K_{i_1} K_{i_2} \dots K_{i_l}}$. Hence the operators $Q_{K_B}$ with $B \subset [k]$ 
generate an algebra isomorphic to $\mathcal{R}_k(\mathfrak{h})$ inside $\mathcal{R}_n(\mathfrak{h})$.
\end{proof}
Using the strategy of this proof, we can always replace indices by sets in any relation given. For example, consider the following relation:
\[
 Q_{123}=Q_{12}+Q_{13}+Q_{23}-Q_1-Q_2-Q_3.
\]
This can be found by straightforward calculation. Using Lemma \ref{Racah1} we automatically have for three disjoint sets $K$, $L$ and $M$ 
\begin{equation}\label{SumQ}
 Q_{KLM}=Q_{KL}+Q_{KM}+Q_{LM}-Q_K-Q_L-Q_M.
\end{equation}
This gives us a number of linear dependencies between the generators $Q_A$. From these dependencies one can prove the following:
\begin{lemma}\label{LinearDependence} For any set $K \subset [n]$, it holds that
\begin{equation*}
 Q_K=\sum_{\left\{i,j\right\}\subset K} Q_{ij}-\left(|K|-2\right)\sum_{i \in K} Q_i .
\end{equation*}
\end{lemma}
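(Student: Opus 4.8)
The plan is to proceed by induction on the cardinality $m=|K|$, using the three–set relation (\ref{SumQ}) as the inductive engine. The base cases are immediate: for $|K|=1$ the right-hand side reads $-(1-2)Q_i=Q_i=Q_K$ since there are no pairs, and for $|K|=2$ with $K=\{i,j\}$ it reduces to $Q_{ij}$ because the singleton sum carries coefficient $-(2-2)=0$. The case $|K|=3$ is exactly (\ref{SumQ}) applied to three singletons, so the induction is genuinely anchored at $m=3$.

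For the inductive step I would assume $m\geq 3$ and that the formula holds for every subset of size strictly less than $m$. First I would choose two distinct elements $s,t\in K$ and set $M:=K\setminus\{s,t\}$, so that $|M|=m-2\geq 1$. Applying (\ref{SumQ}) to the three pairwise disjoint sets $\{s\}$, $\{t\}$ and $M$ gives
\begin{equation*}
Q_K = Q_{st}+Q_{sM}+Q_{tM}-Q_s-Q_t-Q_M,
\end{equation*}
where $sM:=\{s\}\cup M$ and $tM:=\{t\}\cup M$. Since $|sM|=|tM|=m-1$ and $|M|=m-2$ all lie below $m$, the induction hypothesis applies to each of $Q_{sM}$, $Q_{tM}$ and $Q_M$, expanding them into the target form.

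The main work, and the place where an error is most likely to creep in, is the bookkeeping that follows: after substituting the three expansions one must verify that every pair $\{i,j\}\subset K$ ends up with total coefficient $1$ and every singleton $\{i\}$ with $i\in K$ ends up with total coefficient $-(m-2)$. I would organize this by the type of index relative to the splitting $\{s\}\sqcup\{t\}\sqcup M$. A pair $\{s,t\}$ arises only from $Q_{st}$; a mixed pair $\{s,x\}$ with $x\in M$ arises only from $Q_{sM}$; and a pair $\{x,y\}\subset M$ arises once in each of $Q_{sM}$ and $Q_{tM}$ and once with opposite sign in $-Q_M$, so its coefficients combine as $1+1-1=1$. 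For the singletons, the coefficient of $Q_s$ collects $-(m-3)$ from $Q_{sM}$ together with the explicit $-Q_s$, giving $-(m-2)$; the coefficient of an interior $Q_x$ with $x\in M$ collects $-(m-3)$ from each of $Q_{sM}$ and $Q_{tM}$ and $+(m-4)$ from $-Q_M$, and indeed $-2(m-3)+(m-4)=-(m-2)$. Matching these against the target coefficients closes the induction.

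I would also record a direct, non-inductive route that I find cleaner. Expanding $Q_K=a_KA_{0,K}-A_{+,K}A_{-,K}=\sum_{i,j\in K}\left(a_iA_{0,j}-A_{+,i}A_{-,j}\right)$ and separating the diagonal terms (which sum to $\sum_{i\in K}Q_i$) from the off-diagonal ones shows that $Q_{ij}=Q_i+Q_j+T_{ij}$ for a symmetric cross term $T_{ij}$, whence $Q_K=\sum_{i\in K}Q_i+\sum_{\{i,j\}\subset K}T_{ij}$. Substituting $T_{ij}=Q_{ij}-Q_i-Q_j$ and using that each index $i$ lies in exactly $|K|-1$ pairs yields the claim at once; this bypasses the combinatorial bookkeeping entirely and does not even invoke (\ref{SumQ}).
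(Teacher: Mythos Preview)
Your proof is correct. The paper does not actually give a proof of this lemma; it simply states that it follows ``from these dependencies,'' referring to the three-set relation~(\ref{SumQ}), so your inductive argument is precisely the route the paper has in mind, and your bookkeeping is accurate (including the edge cases $m=3,4$ where $|M|\in\{1,2\}$).

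Your second, direct argument is a genuinely different and cleaner alternative: by expanding $Q_K$ from the definition~(\ref{Casimir-Upper}) and isolating the cross terms $T_{ij}=Q_{ij}-Q_i-Q_j$, you get the identity in one stroke without ever invoking~(\ref{SumQ}) or Lemma~\ref{Racah1}. The cost is that it looks inside the tensor-product realization rather than working purely with the relations among the $Q_K$; the benefit is that it eliminates all the combinatorial bookkeeping. Either approach is fine here, and the direct one is arguably the more transparent proof.
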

By Lemma \ref{LinearDependence} it suffices to find the commutation relations of the generating set $\{ Q_{ij}\}$. 

We also have the following lemma:
\begin{lemma}\label{commutator}
 If either $K \subset L$ or $L \subset K$ or $K \cap L=\emptyset$ then $Q_K$ and $Q_L$ commute.
\end{lemma}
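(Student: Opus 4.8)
The plan is to treat the two situations separately, using throughout the single structural fact that $Q_K$ is nothing but the Casimir element of the copy $\mathfrak{h}_K$ of the oscillator algebra, and is therefore central in $\mathcal{U}(\mathfrak{h}_K)$; in other words $Q_K$ commutes with each of $A_{0,K}$, $A_{\pm,K}$ and $a_K$. (This is the defining property of $Q$ in $\mathcal{U}(\mathfrak{h})$, a short computation from the relations \eqref{Oscillator}, transported through the isomorphism $\mathfrak{h} \cong \mathfrak{h}_K$.) The second ingredient is the obvious remark that two operators supported on disjoint collections of tensor factors commute: if $K \cap L = \emptyset$, then every generator of $\mathfrak{h}_K$ acts nontrivially only on the factors indexed by $K$ while every generator of $\mathfrak{h}_L$ acts only on those indexed by $L$.

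The case $K \cap L = \emptyset$ is then immediate. I would note that $Q_K$ is a polynomial in the generators of $\mathfrak{h}_K$ and $Q_L$ a polynomial in the generators of $\mathfrak{h}_L$; since these two sets of generators are supported on disjoint factors they commute pairwise, and hence $[Q_K, Q_L] = 0$.

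The substantive case is $K \subset L$ (the case $L \subset K$ being symmetric). Here I would write $L = K \cup (L \setminus K)$ with $K \cap (L\setminus K)=\emptyset$ and decompose each generator of $\mathfrak{h}_L$ accordingly, e.g. $A_{0,L} = A_{0,K} + A_{0,L \setminus K}$, and likewise $A_{\pm,L} = A_{\pm,K} + A_{\pm,L\setminus K}$ and $a_L = a_K + a_{L\setminus K}$. For each such generator the first summand commutes with $Q_K$ by centrality of the Casimir in $\mathcal{U}(\mathfrak{h}_K)$, while the second summand commutes with $Q_K$ because it is supported on $L\setminus K$, disjoint from $K$. Therefore $[Q_K, A_{0,L}] = [Q_K, A_{\pm,L}] = [Q_K, a_L] = 0$. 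Since $Q_L = a_L A_{0,L} - A_{+,L} A_{-,L}$ is built entirely from these three operators, it follows that $[Q_K, Q_L] = 0$.

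In effect, the containment case is a localized version of the Proposition proved above: the same reasoning that shows $Q_K$ centralizes $\mathfrak{h}_{[n]}$ shows that it centralizes the intermediate subalgebra $\mathfrak{h}_L$ whenever $K \subset L$. The only point requiring care---and the closest thing to an obstacle---is the clean separation of each generator of $\mathfrak{h}_L$ into an ``inside $K$'' part and a ``disjoint from $K$'' part, so that the two commutation mechanisms (centrality of the Casimir and disjoint support) can be applied independently; once this bookkeeping is in place the result is immediate.
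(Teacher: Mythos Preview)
Your proof is correct. It takes a different route from the paper's, which is worth noting.

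The paper's argument is a two-line reduction: it observes that $[Q_1,Q_2]=0$ and $[Q_1,Q_{12}]=0$ hold ``by construction'' in $\mathcal{U}(\mathfrak{h})^{\otimes n}$, and then invokes Lemma~\ref{Racah1} to replace the indices $1,2$ by arbitrary disjoint sets $K,L$ (respectively $K$ and $K\cup L$). The work is thus offloaded onto the morphism lemma already in hand.

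Your argument is more direct and self-contained: you bypass Lemma~\ref{Racah1} entirely and prove the general case by decomposing the generators of $\mathfrak{h}_L$ into an ``inside $K$'' piece (handled by Casimir centrality) and a ``disjoint from $K$'' piece (handled by tensor-factor disjointness). This is essentially the same mechanism that underlies the Proposition stating $\mathcal{R}_n(\mathfrak{h})$ lies in the centralizer of $\mathfrak{h}_{[n]}$, as you yourself observe; indeed your containment case specializes to that Proposition when $L=[n]$. The paper's approach is terser and exploits machinery already built, whereas yours makes the reason for commutation fully explicit and would stand on its own even without Lemma~\ref{Racah1}.
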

\begin{proof}
 By construction we know that $[Q_1,Q_2]=0$ and $[Q_{1},Q_{12}]=0$. Replacing the indices by sets by Lemma \ref{Racah1} concludes the proof.
\end{proof}
In particular the elements $Q_{[n]}$ and $Q_{i}$ are central in $\mathcal{R}_n(\mathfrak{h})$.
This also means that $[Q_{ij},Q_{lk}]=0$ if $\{i,j\}=\{l,k\}$ or $\{i,j\}\cap\{l,k\}=\emptyset$. The operators $Q_{ij}$ and $Q_{lk}$ do not commute only if they have exactly one index in common. 
Investigation by computer shows that it is not possible to write $[Q_{ij},Q_{jk}]$ as a linear combination of the generators $Q_K$. 

The set of commutators $\{[Q_{ij},Q_{jk}]\}$ is also not linear independent. First we have by Lemma \ref{commutator}
\begin{align}\label{Fijk}
\begin{split}
0&=[Q_{ij},Q_{ijk}] \\
 &=[Q_{ij},Q_{ij}+Q_{ik}+Q_{jk}-Q_{i}-Q_{j}-Q_{k}] \\
&=[Q_{ij},Q_{jk}]+[Q_{ij},Q_{ik}].
\end{split}
\end{align}
In the second line we used formula (\ref{SumQ}). We conclude that $[Q_{ij},Q_{jk}]=[Q_{ik},Q_{ij}]$.  Similarly on can show that  $[Q_{jk},Q_{ij}]=[Q_{ik},Q_{jk}]$ by considering $[Q_{jk},Q_{ijk}]$ or, equivalently, switching the indices $i \leftrightarrow k$. We conclude for all $i$, $j$, $k$ in $[n]$:
\begin{equation}\label{Omega}
 [Q_{ij},Q_{jk}]=[Q_{jk},Q_{ik}]=[Q_{ik},Q_{ij}].
\end{equation}

Remember that the elements $a_i$ for $i=1 \dots n$ as defined in formula \eqref{eq:ai} are $n$ different central operators in the algebra $\mathcal{U}(\mathfrak{h})^{\otimes n}$. A tedious computation shows that the following linear relation also holds:
\begin{equation}\label{Commsum}
 a_i[Q_{jk},Q_{kl}]=a_j[Q_{ik},Q_{kl}]-a_k[Q_{ij},Q_{jl}]+a_l[Q_{ij},Q_{jk}].
\end{equation}

All double commutators are obtained from the following two expressions by switching indices, from the properties of the commutator and from relation \ref{Omega}:
\begin{align}
\begin{split} \label{relation1}
[[Q_{ij},Q_{jk}],Q_{ij}]&=a_k(a_i-a_j)Q_{ij}-a_i(a_i+a_j)Q_{jk}+a_j(a_i+a_j)Q_{ik} \\
				& -(a_j+a_k)(a_i+a_j)Q_i+(a_i+a_k)(a_i+a_j)Q_j+(a_i^2-a_j^2)Q_k ,
\end{split} \\
\begin{split} \label{relation2}
[[Q_{ij},Q_{jk}],Q_{kl}]&=a_ia_l(Q_{jk}-Q_j-Q_k)-a_ja_l(Q_{ik}-Q_i-Q_k) \\
				&-a_ia_k(Q_{jl}-Q_j-Q_l)+a_ja_k(Q_{il}-Q_i-Q_l).
\end{split} 
\end{align}
It follows that the generators also satisfy the following relations for all $i$, $j$ and $k$ in $[n]$:
\begin{equation}\label{Dolan-Grady}
 [[[Q_{ij},[Q_{ij},[Q_{ij},Q_{jk}]]]=(a_i+a_j)^2[Q_{ij},Q_{jk}] .
 \end{equation}
 Observe that relation (\ref{Dolan-Grady}) is the Dolan-Grady relation up to central elements as defined in \cite{Dolan&Grady}.

  Let us generate an algebra from the set $\{ Q_K\,|\, \emptyset \neq K \subset [n]\}$ using the Lie bracket instead of the ordinary multiplication on $ \mathcal{U}(\mathfrak{h})^{\otimes n}$. 
  Denote it by $\mathcal{R}_n(\mathfrak{h},[])$. From the relations above we conclude that $\mathcal{R}_n(\mathfrak{h},[])$ is generated as a vector space by 
  the operators $Q_K$ and their commutators $[Q_K,Q_L]$ over the field $\mathbb{R}(a_1,\dots, a_n)$. We perform some reductions. First the $Q_i$ and $Q_{ij}$ generate all $Q_K$ by Lemma \ref{LinearDependence}. 
  The commutators $[Q_{ij},Q_{jk}]$ can be written as a linear combination of $[Q_{1j},Q_{jk}]$ by formula \eqref{Commsum}. By formula \eqref{Fijk} 
  we have $[Q_{1j},Q_{jk}]=[Q_{1k},Q_{kj}]$ so we require that $j<k$. It follows then that as a vector space $\mathcal{R}_n(\mathfrak{h},[])$ is generated by the following set:
\begin{equation} \label{engendre}
  \{ Q_i\, | \,i \in [n] \} \cup \{ Q_{ij}\,| \, 1\leq i<j\leq n\} \cup \{ [Q_{1j}, Q_{jk}]\,| \,1 <j<k \leq n \}.
\end{equation}
We will prove later on that this is a basis for $\mathcal{R}_n(\mathfrak{h},[])$ as a vector space over the field $\mathbb{R}(a_1,\dots, a_n)$. Moreover, we will prove that the set of equalities \eqref{relation1}, \eqref{relation2} and \eqref{Commsum} together with Lemma \ref{commutator} and Lemma \ref{LinearDependence} exhausts all commutation relations of $\mathcal{R}_n(\mathfrak{h},[])$.

\section{Embedding of $\mathfrak{sl}_{n-1}$ into $\mathcal{R}_n(\mathfrak{h})$} \label{mainsection}

In this section we study the relationship between the special linear Lie algebra $\mathfrak{sl}_{n-1}$ and $\mathcal{R}_n(\mathfrak{h})$. The Lie algebra $\mathfrak{sl}_{n-1}$ is generated by the following set of elements: $\{ e_{kk+1},\, e_{k+1k}, \, h_k\, |\, 1\leq k \leq n-2\}$.  By applying Serre's theorem on $\mathfrak{sl}_{n-1}$ we are guaranteed of a full set of relations which we give here. To this end we introduce the Cartan matrix:
\begin{align*}
 A_{ij}=\begin{cases} 2 &\text{ if } i=j \\ -1 &\text{ if } j=i\pm 1 \\ 0 & \text{ if } |j-i|>1 \end{cases}.
\end{align*}
Here are the Chevalley-Serre relations:
\begin{align}
&[h_i,h_j]=0,& \label{S1} \\
&[e_{ii+1},e_{j+1j} ]=\delta_{ij}h_i, & \label{S2} \\
& [h_i,e_{jj+1}]=A_{ij}e_{jj+1}, \quad [h_i,e_{j+1j}]=-A_{ij}e_{j+1j}, & \label{S3}  \\
& \textup{ad}(e_{ii+1})^{1-A_{ij}}(e_{jj+1})=0,&\text{ if } i\neq j  \label{S+}\\
& \textup{ad}( e_{i+1i})^{1-A_{ij}}(e_{j+1j})=0. & \text{ if } i\neq j \label{S-}
\end{align}
The operator $\textup{ad}$ is the adjoint action: $\textup{ad}(x)(y):=[x,y]$. 
The set $\{ h_k\, |\, 1 \leq k \leq n-2\}$ generates the Cartan algebra of $\mathfrak{sl}_{n-1}$. When we consider the $\mathfrak{sl}_2$ case, there are three generators $\{e_{12},e_{21}, h_1\}$ with following relations:
\begin{equation}\label{relsl2}
 [e_{12},e_{21}]=h_1, \quad [h_1,e_{12}]=2e_{12}, \quad [h_1,e_{21}]=-2e_{21}.
\end{equation}
From the Chevalley-Serre relations we see that every triple $\{ e_{ii+1},e_{i+1i},h_i\}$ generates a copy of $\mathfrak{sl}_2$.

\subsection{Embedding of $\mathfrak{sl}_2$ into $\mathcal{R}_3(\mathfrak{h})$}
Consider the algebra $\mathcal{R}_3(\mathfrak{h})$ and consider the adjoint action of $Q_{12}$ on $\mathcal{R}_3(\mathfrak{h})$. We want to find its eigenspaces. The eigenspace with eigenvalue $0$ is a five-dimensional space generated by the central elements $\{Q_1, Q_2, Q_3, Q_{123}\}$ and the operator $Q_{12}$. The eigenvectors with nonzero eigenvalue are
\begin{align}
\begin{split}\label{sl2gen}
 e_{12}&:= \lambda_1([Q_{12},[Q_{12},Q_{23}]]+(a_1+a_2)[Q_{12},Q_{23}] ),\\
 e_{21}&:= \lambda_1([Q_{12},[Q_{12},Q_{23}]]-(a_1+a_2)[Q_{12},Q_{23}] ). 
 \end{split}
\end{align}
We introduced the number
\[
 \lambda_1=\frac{1}{\sqrt{4a_{1}a_{2}a_{3}(a_{1}+a_2)^2(a_{1}+a_2+a_3)}}.
\]
One checks easily using relation \eqref{Dolan-Grady} that:
\begin{align}
\begin{split}\label{eigenvectors}
 [Q_{12}, e_{12}]&=(a_1+a_2) e_{12}, \\
 [Q_{12}, e_{21}]&=-(a_1+a_2)e_{21}.
 \end{split}
\end{align}
We also define the following operator
\begin{equation}\label{sl2h}
h_1:=\frac{2Q_{12}}{a_1+a_2}-\frac{Q_{123}}{a_1+a_2+a_3}-\frac{Q_1}{a_1}-\frac{Q_2}{a_2}+\frac{Q_3}{a_3}.
\end{equation}
We have the following proposition:
\begin{proposition}\label{sl2}
The operators $e_{12}$, $e_{21}$ and $h_1$ satisfy the commutation relations of $\mathfrak{sl}_2$.
\begin{equation*}
 [e_{12},e_{21}]=h_1, \quad [h_1,e_{12}]=2e_{12}, \quad [h_1,e_{21}]=-2e_{21}
\end{equation*}
\end{proposition}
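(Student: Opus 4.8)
The plan is to settle the two adjoint relations involving $h_1$ immediately, and then to reduce $[e_{12},e_{21}]=h_1$ to the explicit double-commutator identities \eqref{relation1} and \eqref{relation2}. For the first task, observe that $h_1-\frac{2Q_{12}}{a_1+a_2}$ is a linear combination of $Q_1$, $Q_2$, $Q_3$ and $Q_{123}$, each of which is central in $\mathcal{R}_3(\mathfrak{h})$ by the remark following Lemma \ref{commutator}. These terms therefore commute with $e_{12}$ and $e_{21}$, so only the $Q_{12}$ part of $h_1$ contributes and the eigenvalue relations \eqref{eigenvectors} give $[h_1,e_{12}]=\frac{2}{a_1+a_2}[Q_{12},e_{12}]=2e_{12}$ and $[h_1,e_{21}]=\frac{2}{a_1+a_2}[Q_{12},e_{21}]=-2e_{21}$. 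This disposes of two of the three relations with essentially no computation.

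For the bracket $[e_{12},e_{21}]$, I would write $s=a_1+a_2$, $D=[Q_{12},Q_{23}]$ and $C=[Q_{12},D]$, so that $e_{12}=\lambda_1(C+sD)$ and $e_{21}=\lambda_1(C-sD)$. Bilinearity and antisymmetry of the bracket collapse the expansion to $[e_{12},e_{21}]=-2s\lambda_1^2\,[C,D]$. Before computing $[C,D]$ I would record a structural constraint: by \eqref{eigenvectors} and the Jacobi identity, $[Q_{12},[e_{12},e_{21}]]=[[Q_{12},e_{12}],e_{21}]+[e_{12},[Q_{12},e_{21}]]=s[e_{12},e_{21}]-s[e_{12},e_{21}]=0$, so $[e_{12},e_{21}]$ lies in the kernel of $\textup{ad}(Q_{12})$, the five-dimensional space spanned by $Q_1,Q_2,Q_3,Q_{123}$ and $Q_{12}$ identified at the start of this subsection. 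Thus at most five scalar coefficients must be pinned down, and the nonzero eigenvectors and higher nested commutators are guaranteed not to survive in $[C,D]$.

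To evaluate $[C,D]$ I would first linearize $C$. Since $C=-[[Q_{12},Q_{23}],Q_{12}]$, relation \eqref{relation1} with $(i,j,k)=(1,2,3)$ expresses $C$ explicitly as a combination of $Q_{12},Q_{23},Q_{13}$ and the central $Q_1,Q_2,Q_3$. Hence $[C,D]$ reduces to a combination of $[Q_{12},D]=C$, $[Q_{23},D]$ and $[Q_{13},D]$, the central terms dropping out. The last two are again nested double commutators, which I would bring into the shape of \eqref{relation1} by means of the $\Omega$-identity \eqref{Omega}: it lets me rewrite $D=[Q_{12},Q_{23}]=[Q_{13},Q_{12}]=[Q_{23},Q_{13}]$, so that e.g.\ $[Q_{13},D]=[Q_{13},[Q_{13},Q_{12}]]$ is directly of the form covered by \eqref{relation1}. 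Collecting the three contributions, the decisive structural feature is that the coefficients of $Q_{13}$ and of $Q_{23}$ come out equal; by \eqref{SumQ} I may then substitute $Q_{13}+Q_{23}=Q_{123}-Q_{12}+Q_1+Q_2+Q_3$, which reabsorbs the off-diagonal generators into the central $Q_{123}$ at the cost of a correction to the coefficient of $Q_{12}$.

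After this substitution $[C,D]$ is a multiple of $Q_{12}$ plus central terms; the coefficient of $Q_{12}$ works out to $-4a_1a_2a_3(a_1+a_2+a_3)$, and multiplying by $-2s\lambda_1^2$ with the stated value of $\lambda_1$ yields exactly $\frac{2}{a_1+a_2}Q_{12}$, the non-central part of $h_1$. The genuine labour, and the main obstacle, is the bookkeeping of the central contributions accumulated in $C$, $[Q_{23},D]$ and $[Q_{13},D]$, together with those produced by the $Q_{13}+Q_{23}$ substitution: I would need to verify that they assemble precisely into $-\frac{Q_{123}}{a_1+a_2+a_3}-\frac{Q_1}{a_1}-\frac{Q_2}{a_2}+\frac{Q_3}{a_3}$. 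The normalization $\lambda_1$ has evidently been chosen so as to make the overall coefficient of $Q_{12}$ equal $2/(a_1+a_2)$, and once that coefficient is checked the definition of $h_1$ simply packages the remaining central data.
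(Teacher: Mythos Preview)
Your proposal is correct and amounts to a well-organized execution of what the paper records merely as ``One checks through straightforward calculation.'' There is no alternative argument in the paper to compare against; you are simply supplying the details.

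Two small remarks. First, your opening sentence mentions both \eqref{relation1} and \eqref{relation2}, but \eqref{relation2} requires four distinct indices and cannot appear inside $\mathcal{R}_3(\mathfrak{h})$; indeed your actual computation uses only \eqref{relation1} together with \eqref{Omega} and \eqref{SumQ}, so you should drop the reference to \eqref{relation2}. Second, your structural observation that $[e_{12},e_{21}]$ lies in the five-dimensional $\textup{ad}(Q_{12})$-kernel is a genuinely useful sanity check not made explicit in the paper: it guarantees in advance that $Q_{13}$ and $Q_{23}$ must enter $[C,D]$ with equal coefficients (since neither individually lies in the kernel but $Q_{13}+Q_{23}=Q_{123}-Q_{12}+Q_1+Q_2+Q_3$ does), which is exactly the mechanism you exploit. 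Your claimed coefficient $-4a_1a_2a_3(a_1+a_2+a_3)$ for $Q_{12}$ in $[C,D]$ is correct, and the $Q_{123}$ coefficient $2a_1a_2a_3(a_1+a_2)$ produced by the substitution also scales to $-1/(a_1+a_2+a_3)$ as required; the remaining central pieces work out the same way.
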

\begin{proof}
One checks through straightforward calculation.
\end{proof}
By Proposition \ref{sl2} we have a map of $\mathfrak{sl}_2 $ into $\mathcal{R}_3(\mathfrak{h})$. Because $\mathfrak{sl}_2$ is simple, the kernel of this map is either trivial of equal to the whole algeba $\mathfrak{sl}_2$. Clearly, $e_{12}$, $e_{21}$ and $h_1$ are different from $0$ so the kernel must be trivial. This map must therefore be injective and we have indeed an embedding of $\mathfrak{sl}_2$ into $\mathcal{R}_3(\mathfrak{h})$.

\subsection{Embedding of $\mathfrak{sl}_3$ into $\mathcal{R}_4(\mathfrak{h})$}
Consider the algebra $\mathcal{R}_4(\mathfrak{h})$. We want to find the common eigenvectors of $Q_{12}$ and $Q_{123}$.
By Lemma \ref{commutator} the operators $e_{12}$, $e_{21}$ and $h_1$ commute with $Q_{123}$ and are therefore eigenvectors of both $Q_{12}$ and $Q_{123}$. 
We can find another set of eigenvectors using Lemma \ref{Racah1}. Consider the operators $e_{12}$, $e_{21}$ and $h_1$ expressed in the operators $Q_{12}$, $Q_{23}$ and $a_{12}$ and replace the indices as follows by Lemma \ref{Racah1}: $1 \rightarrow \{1,2\}$, $2\rightarrow 3$ and $3\rightarrow 4$. We find the following operators:
\begin{align*}
	e_{23}&:=\lambda_2([Q_{123},[Q_{123},Q_{34}]]+(a_1+a_2+a_3)[Q_{123},Q_{34}] ), \\
	e_{32}&:=\lambda_2([Q_{123},[Q_{123},Q_{34}]]-(a_1+a_2+a_3)[Q_{123},Q_{34}]), \\
	h_2&:=\frac{2Q_{123}}{a_1+a_2+a_3}-\frac{Q_{1234}}{a_1+a_2+a_3+a_4}-\frac{Q_{12}}{a_1+a_2}-\frac{Q_3}{a_3}+\frac{Q_4}{a_4}.
\end{align*}
The number $\lambda_2$ is given by
\[
 \lambda_2=\frac{1}{\sqrt{4(a_1+a_{2})a_{3}a_{4}(a_1+a_{2}+a_3)^2(a_1+a_{2}+a_3+a_4)}}.
\]
By Lemma \ref{commutator} the operators $e_{23}$, $e_{32}$ and $h_2$ commute with $Q_{12}$. They are also eigenvectors of $Q_{123}$ by formula \ref{eigenvectors}:
\begin{align}\label{eigenvectors123}
\begin{split}
 [Q_{123}, e_{23}]&=(a_1+a_2+a_3) e_{23}, \\
 [Q_{123}, e_{32}]&=-(a_1+a_2+a_3) e_{32}.
\end{split}
\end{align}
The operators $e_{23}$, $e_{32}$ and $h_2$ satisfy the $\mathfrak{sl}_2$ relations:
\begin{equation*}
 [e_{23},e_{32}]=h_2, \quad [h_2,e_{23}]=2e_{23}, \quad [h_2,e_{32}]=-2e_{32}.
\end{equation*}

We have the following claim:
\begin{proposition}\label{sl3}
The operators $\{ e_{12}, e_{21}, e_{23},e_{32}, h_1,h_2\} $ satisfy the commutation relations of $\mathfrak{sl}_3$.
\end{proposition}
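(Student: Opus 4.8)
The plan is to check that the six operators obey the Chevalley--Serre relations \eqref{S1}--\eqref{S-} for the $\mathfrak{sl}_3$ Cartan matrix, i.e.\ with $A_{12}=A_{21}=-1$. Two of the three ingredients are already available: Proposition \ref{sl2} provides the $\mathfrak{sl}_2$-triple $\{e_{12},e_{21},h_1\}$, and transporting it through the morphism of Lemma \ref{Racah1} (with $1\mapsto\{1,2\}$, $2\mapsto 3$, $3\mapsto 4$) yields the second $\mathfrak{sl}_2$-triple $\{e_{23},e_{32},h_2\}$, as already noted before the statement. Thus the diagonal ($i=j$) cases of \eqref{S2} and \eqref{S3} hold. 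What remains are the relations that couple the two triples: $[h_1,h_2]=0$; the cross weight-relations $[h_1,e_{23}]=-e_{23}$, $[h_1,e_{32}]=e_{32}$, $[h_2,e_{12}]=-e_{12}$, $[h_2,e_{21}]=e_{21}$; the off-diagonal cases $[e_{12},e_{32}]=0$ and $[e_{23},e_{21}]=0$ of \eqref{S2}; and the Serre relations \eqref{S+}, \eqref{S-}.

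First I would clear the relations that reduce directly to Section \ref{sectiondef}. Relation \eqref{S1} is immediate: by \eqref{sl2h} and its analogue for $h_2$, both $h_1$ and $h_2$ are combinations of $Q_{12}$, $Q_{123}$ and central elements, and $[Q_{12},Q_{123}]=0$ by Lemma \ref{commutator} since $\{1,2\}\subset\{1,2,3\}$. For the cross weight-relations, note that $e_{23},e_{32},h_2$ are built from $Q_{123}$, $Q_{34}$ and central $a$'s, each of which commutes with $Q_{12}$ by Lemma \ref{commutator} (because $\{1,2\}\subset\{1,2,3\}$ and $\{1,2\}\cap\{3,4\}=\emptyset$); hence the $Q_{12}$ term of $h_1$ drops out and only the $Q_{123}$ term survives, so \eqref{eigenvectors123} gives $[h_1,e_{23}]=-e_{23}$ and $[h_1,e_{32}]=e_{32}$. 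Symmetrically, $e_{12},e_{21}$ are built from $Q_{12},Q_{23}$, both lying inside $\{1,2,3\}$, so they commute with $Q_{123}$; only the $Q_{12}$ term of $h_2$ survives and \eqref{eigenvectors} gives $[h_2,e_{12}]=-e_{12}$, $[h_2,e_{21}]=e_{21}$. This disposes of \eqref{S1} and \eqref{S3}.

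The real obstacle is the off-diagonal pair $[e_{12},e_{32}]=0$ and $[e_{23},e_{21}]=0$ of \eqref{S2}. Lemma \ref{commutator} no longer suffices: $e_{12}$ involves $Q_{23}$ and $e_{32}$ involves $Q_{34}$, and $\{2,3\}\cap\{3,4\}=\{3\}$, so these operators share exactly one index and fail to commute termwise (the same happens for $e_{23}$ and $e_{21}$). I would therefore expand each commutator, which is a nested commutator of the $Q_{ij}$, and collapse it to zero using the Jacobi identity together with the explicit structure identities \eqref{Omega}, \eqref{Commsum}, \eqref{relation1} and \eqref{relation2}. This is the computational core of the proof and the step most prone to error; it is the natural place to invoke a computer-algebra verification.

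With the cross-commutators in hand, the Serre relations \eqref{S+} and \eqref{S-} follow cheaply by the standard $\mathfrak{sl}_2$-string argument, so that no further heavy computation is needed. For the case $i=1$, $j=2$ of \eqref{S+}, the identity $[e_{21},e_{23}]=0$ together with $[h_1,e_{23}]=-e_{23}$ exhibits $e_{23}$ as a lowest-weight vector of $h_1$-weight $-1$ for the adjoint action of the first $\mathfrak{sl}_2$; a one-line Jacobi computation then shows that $\textup{ad}(e_{12})^2(e_{23})$ has $h_1$-weight $+3$ and is again annihilated by $\textup{ad}(e_{21})$, so it is a lowest-weight vector of positive weight and must vanish. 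The remaining three Serre relations are identical in spirit, each built from one of the two cross-commutators and its matching weight relation (using the second $\mathfrak{sl}_2$ when $e_{12}$ is the target, and the highest-weight version of the argument for \eqref{S-}). The one subtlety is that this string argument presupposes local finiteness of the adjoint $\mathfrak{sl}_2$-action; if that is inconvenient to establish directly, these four relations can instead be confirmed by the same explicit computation as in the previous step.
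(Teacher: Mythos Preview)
Your proposal is correct and mirrors the paper's proof: both check the Chevalley--Serre relations, using Lemma \ref{commutator} and \eqref{eigenvectors}--\eqref{eigenvectors123} for \eqref{S1} and \eqref{S3}, and direct computation for the off-diagonal cases of \eqref{S2}. The only divergence is that the paper verifies the Serre relations \eqref{S+}--\eqref{S-} by direct computation as well, whereas you offer the $\mathfrak{sl}_2$-string argument; your local-finiteness worry can be resolved without the fallback, since Section \ref{sectiondef} shows $\mathcal{R}_4(\mathfrak{h},[])$ is spanned over $\mathbb{K}$ by the finite set \eqref{engendre} and all six operators together with $e_{23}$ lie in this Lie algebra.
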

\begin{proof}
We already know that both $\{e_{12}, e_{21}, h_1\}$ and $\{e_{23}, e_{32}, h_2\}$ satisfy the $\mathfrak{sl}_2$ relations. It is also easy to show by Lemma \ref{commutator} that
\[
[h_1,h_2]=0. 
\]
By straightforward calculation
\begin{align*}
[e_{12},e_{32}]=0, \\
[e_{21},e_{23}]=0.
\end{align*}
By now we verified relations \eqref{S1} and \eqref{S2}.
By explicit calculation using the definition of $h_1$ and $h_2$ and formula (\ref{eigenvectors}) and (\ref{eigenvectors123}):
\begin{align*}
[h_1,e_{23}]	&=\left[-\frac{Q_{123}}{a_1+a_2+a_3},e_{23}\right]=-e_{23}, \\
[h_1,e_{32}]	&=\left[-\frac{Q_{123}}{a_1+a_2+a_3},e_{32}\right] =e_{32}, \\
[h_2,e_{12}]	&=\left[-\frac{Q_{12}}{a_1+a_2},e_{12}\right]=-e_{12}, \\
[h_2,e_{21}]	&=\left[-\frac{Q_{12}}{a_1+a_2},e_{21}\right]=e_{21} . 			
\end{align*}

Relation \eqref{S3} is also satisfied. We only need check \eqref{S+} and \eqref{S-}:
\begin{align*}
[ e_{12},[e_{12},e_{23}]]&=0, \\
[ e_{23},[e_{23},e_{12}]]&=0, \\
[ e_{21},[e_{21},e_{32}]]&=0, \\
[ e_{32},[e_{32},e_{21}]]&=0 .
\end{align*}
This is done by straightforward computation. By Serre's Theorem these are a complete set of defining relations and this concludes the proof.
\end{proof}
By Proposition \ref{sl3} we have a map of $\mathfrak{sl}_3 $ into $\mathcal{R}_4(\mathfrak{h})$. Because $\mathfrak{sl}_3$ is simple, the kernel of this map is either trivial of equal to the whole algeba $\mathfrak{sl}_3$. The generators we used in Proposition \ref{sl3} are different from $0$. This map must therefore be injective and we have indeed an embedding of $\mathfrak{sl}_3$ into $\mathcal{R}_4(\mathfrak{h})$.

\subsection{Embedding of $\mathfrak{sl}_{n-1}$ into $\mathcal{R}_n(\mathfrak{h})$}
As in the previous two sections we construct common eigenvectors for the adjoint action of the Abelian subalgebra $\mathcal{Y}=\{ Q_{[k]} \,| \,2\leq k \leq n-1\}$. Observe that by formula (\ref{aK}) we have  $a_B=\sum_{i \in B} a_i$. To construct the eigenvectors we use Lemma \ref{Racah1} in the following way: Take $e_{12}$, $e_{21}$ and $h_1$ given by formulas \eqref{sl2gen} and \eqref{sl2h} and replace $1$ by $[k]$, $2$ by  $k+1$ and $3$ by  $k+2$.
We obtain the following elements:
\begin{align*}
e_{k k+1}&:=\lambda_k([Q_{[k+1]},[Q_{[k+1]},Q_{k+1 k+2}]]+a_{[k+1]}[Q_{[k+1]},Q_{k+1 k+2}]), \\
e_{k+1 k}&:=\lambda_k([Q_{[k+1]},[Q_{[k+1]},Q_{k+1 k+2}]]-a_{[k+1]}[Q_{[k+1]},Q_{k+1 k+2}] ),\\
h_k&:=\frac{2Q_{[k+1]}}{a_{[k+1]}}-\frac{Q_{[k+2]}}{a_{[k+2]}}-\frac{Q_{[k]}}{a_{[k]}}-\frac{Q_{k+1}}{a_{k+1}}+\frac{Q_{k+2}}{a_{k+2}}.
\end{align*}
We introduced the element
 \[
 \lambda_k=\frac{1}{\sqrt{4a_{[k]}a_{k+1}a_{k+2}a_{[k+1]}^2a_{[k+2]}}}.
 \]
 We check that these operators are indeed eigenvectors of the Abelian algebra $\mathcal{Y}$. By Lemma \ref{commutator} the operator $Q_{[l]}$ commutes with $e_{kk+1}$ and $e_{k+1k}$ if $l \neq k+1$ and with $h_k$ for all $k$. If $l=k+1$ we have
 \begin{align*}
  [Q_{[k+1]},&e_{kk+1}]\\
  =\lambda_k(&[Q_{[k+1]},[Q_{[k+1]},[Q_{[k+1]},Q_{k+1 k+2}]]]+a_{[k+1]}[Q_{[k+1]},[Q_{[k+1]},Q_{k+1 k+2}]]) \\
  =\lambda_k(&a_{[k]}^2[Q_{[k+1]},Q_{k+1 k+2}]+a_{[k+1]}[Q_{[k+1]},[Q_{[k+1]},Q_{k+1 k+2}]]) \\
  =a_{[k]}&e_{kk+1}.
   \end{align*}
 We used formula (\ref{Dolan-Grady}). Similarly one can show that  $[Q_{[k+1]},e_{k+1k}]=-a_{[k]}e_{k+1k}$. We are now ready to prove the following theorem.

\begin{theorem}\label{sln}
The set of operators $\{ e_{i i+1} , e_{ii+1},h_i| i\in [n-1]\} \subset \mathcal{R}_{n}(\mathfrak{h})$  generate an algebra isomorphic to $\mathfrak{sl}_{n-1}$ for the Lie bracket.
\end{theorem}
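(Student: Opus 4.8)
The plan is to verify the full set of Chevalley--Serre relations \eqref{S1}--\eqref{S-} for the operators $e_{ii+1}$, $e_{i+1i}$, $h_i$ and then invoke Serre's theorem, exactly as in Propositions \ref{sl2} and \ref{sl3}. The relations organize themselves into three regimes according to $|i-j|$, and the guiding idea is to inherit the diagonal and adjacent cases wholesale from the already-established rank-$1$ and rank-$2$ results through the embedding morphism $\theta$ of Lemma \ref{Racah1}, reducing the genuinely new work to the non-adjacent regime.

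For the diagonal relations ($i=j$), each triple $\{e_{kk+1}, e_{k+1k}, h_k\}$ is by construction the image of the $\mathfrak{sl}_2$ triple $\{e_{12}, e_{21}, h_1\}$ of Proposition \ref{sl2} under $\theta$ with $K_1=[k]$, $K_2=\{k+1\}$, $K_3=\{k+2\}$; since $\theta$ is an algebra morphism, the $\mathfrak{sl}_2$ relations transport directly. For adjacent indices ($|j-i|=1$, say the pair $(k,k+1)$), I would apply Lemma \ref{Racah1} with the four-block partition $K_1=[k]$, $K_2=\{k+1\}$, $K_3=\{k+2\}$, $K_4=\{k+3\}$. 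A direct check on generators shows that $\theta$ sends the six $\mathfrak{sl}_3$ generators of Proposition \ref{sl3} exactly onto $\{e_{kk+1}, e_{k+1k}, h_k, e_{k+1k+2}, e_{k+2k+1}, h_{k+1}\}$, so every relation between adjacent pairs --- the off-diagonal instances of \eqref{S2} and \eqref{S3}, and the Serre relations \eqref{S+}, \eqref{S-} for $A_{ij}=-1$ --- follows at once because $\theta$ preserves brackets.

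The remaining regime is $|j-i|\ge 2$; take $i<j$ with $j\ge i+2$. Here $e_{ii+1}$, $e_{i+1i}$ are built solely from $Q_{[i+1]}$ and $Q_{i+1\,i+2}$, while $e_{jj+1}$, $e_{j+1j}$ are built from $Q_{[j+1]}$ and $Q_{j+1\,j+2}$. I would check that each of the four cross-pairs falls under one of the three hypotheses of Lemma \ref{commutator}: $[i+1]\subset[j+1]$ and $\{i+1,i+2\}\subset[j+1]$ (using $j+1\ge i+3$) give nesting, while $\{j+1,j+2\}$ is disjoint from both $[i+1]$ and $\{i+1,i+2\}$. Thus every building block of the first pair commutes with every building block of the second, so the off-diagonal \eqref{S2} and the Serre relations \eqref{S+}, \eqref{S-} (which for $A_{ij}=0$ collapse to a single commutator) all vanish. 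Relation \eqref{S1} is handled in the same spirit: each $h_k$ is a combination of the totally ordered family $\{Q_{[l]}\}$ and the central singletons, all pairwise commuting by Lemma \ref{commutator}, so $[h_i,h_j]=0$ for all $i,j$.

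Having confirmed \eqref{S1}--\eqref{S-}, Serre's theorem yields a Lie algebra homomorphism $\mathfrak{sl}_{n-1}\to\mathcal{R}_n(\mathfrak{h})$; simplicity of $\mathfrak{sl}_{n-1}$ and the nonvanishing of the generators then force the kernel to be trivial, giving the claimed embedding. I expect the only real obstacle to be the bookkeeping in the non-adjacent regime --- verifying that each pair of building-block Casimirs genuinely satisfies one of the nesting or disjointness conditions of Lemma \ref{commutator} --- since the diagonal and adjacent cases are imported verbatim from the lower-rank propositions via $\theta$.
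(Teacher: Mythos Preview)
Your argument is correct and uses the same ingredients as the paper---Lemma \ref{Racah1} to transport the rank-$1$ and rank-$2$ relations, Lemma \ref{commutator} for the commuting pairs, and simplicity of $\mathfrak{sl}_{n-1}$ for injectivity---but organizes them differently. The paper proceeds by induction on $n$ via a single shift morphism $\sigma:\mathcal{R}_{n-1}(\mathfrak{h})\to\mathcal{R}_n(\mathfrak{h})$ (sending $1\mapsto\{1,2\}$, $i\mapsto i+1$) so that every relation among the top $n-2$ generator triples is inherited from the $\mathfrak{sl}_{n-2}$ inductive hypothesis; you instead bypass induction entirely by embedding a fresh copy of $\mathcal{R}_4(\mathfrak{h})$ for each adjacent pair $(k,k+1)$ and invoking Proposition \ref{sl3} directly. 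Your route is arguably more transparent, since each relation is traced back in one step to Proposition \ref{sl2}, Proposition \ref{sl3}, or Lemma \ref{commutator}, whereas the paper's inductive packaging hides the fact that only rank $\le 2$ information is ever used.

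One small gap: in the non-adjacent regime you verify \eqref{S2}, \eqref{S+}, \eqref{S-} but do not mention \eqref{S3}, i.e.\ $[h_i,e_{jj+1}]=0$ for $|i-j|\ge 2$. This follows by the same Lemma \ref{commutator} bookkeeping, since $h_i$ is a combination of $Q_{[i]},Q_{[i+1]},Q_{[i+2]}$ and singletons, each of which is nested in or disjoint from the building blocks $Q_{[j+1]},Q_{j+1\,j+2}$ of $e_{jj+1}$; you should record this explicitly. The paper handles \eqref{S3} for all $i,j$ in one stroke by a direct eigenvalue computation using $[Q_{[k+1]},e_{jj+1}]=\delta_{kj}a_{[k+1]}e_{jj+1}$, which is another clean option.
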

\begin{proof}
We prove this statement by induction. The cases $n=2$ and $n=3$ have already be obtained in Propositions \ref{sl2} and \ref{sl3}. Now assume that
$\{ e_{i i+1} , e_{ii+1},h_i| i\in [n-2]\} $ generates $\mathfrak{sl}_{n-2}$. To go to $\mathfrak{sl}_{n-1}$ we add three new operators:  $ e_{n-2n-1}$, $e_{n-1n-2}$ and $h_{n-2}$. We introduce a morphism using Lemma \ref{Racah1}:
\[
	\sigma:\mathcal{R}_{n-1}(\mathfrak{h}) \rightarrow \mathcal{R}_{n}(\mathfrak{h}) 
\]
by mapping the indices $1\rightarrow \{1,2\}$ and $i \rightarrow i+1$ for every $i>1$. One should notice that $\sigma(e_{k-1k})=e_{kk+1}$, $\sigma(e_{kk-1})=e_{k+1k}$ and $\sigma(h_{k-1})=h_{k}$. It maps $\mathfrak{sl}_{n-2}$ as a subalgebra of $\mathcal{R}_{n-1}(\mathfrak{h})$ into $\mathcal{R}_{n}(\mathfrak{h}) $. We will use this map a few times.

We check the relations of $\mathfrak{sl}_n$ using Serre's Theorem.
By Lemma \ref{Racah1} and Proposition \ref{sl2} we know that $e_{kk+1}$, $e_{k+1k}$ and $h_k$ generate a $\mathfrak{sl}_2$ algebra for every $k$:
\begin{align*}
[h_k,e_{kk+1}]&=2e_{kk+1}, \\
[h_k,e_{k+1k}]&=-2e_{k+1k}, \\
[e_{kk+1},e_{k+1k}]&=h_k.
\end{align*}
 We also have by Lemma \ref{commutator}:
\[
 [h_k,h_l]=0.
\] 
Relation \eqref{S1} is satisfied so the set $\{ h_k\, |\, k=1\dots n-1\}$ plays the role  Cartan algebra of $\mathfrak{sl}_{n-1}$.
The following relation we need to check is $[e_{kk+1},e_{l+1l}]=0$ if $l \neq k$. If $l \notin \{k-1,k,k+1\}$ this is true by Lemma \ref{commutator}. Otherwise set $l=k+1$. Then we can use the map $\sigma$:
\[
 [e_{kk+1},e_{k+2k+1}]=\sigma( [e_{k-1k},e_{k+1k}])=\sigma(0)=0.
\]
Here we used the relations of the algebra $\mathfrak{sl}_{n-2}$. The proof for $l=k-1$ is analogous. Relation \eqref{S2} is satisfied.
The next commutators we need to calculate are $[h_k,e_{jj+1}]$ and  $[h_k,e_{j+1j}]$:
\begin{align*}
[h_k,e_{jj+1}]&=\left[\frac{2Q_{[k+1]}}{a_{[k+1]}}-\frac{Q_{[k+2]}}{a_{[k+2]}}-\frac{Q_{[k]}}{a_{[k]}},e_{jj+1}\right] \\
	&=2\delta_{k,j}e_{jj+1}-\delta_{k+2,j+1}e_{jj+1}-\delta_{k,j+1}e_{jj+1}, \\
[h_k,e_{j+1j}]&=\left[\frac{2Q_{[k+1]}}{a_{[k+1]}}-\frac{Q_{[k+2]}}{a_{[k+2]}}-\frac{Q_{[k]}}{a_{[k]}},e_{j+1j}\right] \\
	&=-2\delta_{k,j}e_{j+1j}+\delta_{k+2,j+1}e_{j+1j}+\delta_{k,j+1}e_{j+1j}.
\end{align*}
If $k \notin \{j-1,j,j+1\}$ then $[h_k,e_{jj+1}]=0$ and $[h_k,e_{j+1j}]=0$. Otherwise, we have
\begin{align*}
 [h_{j-1},e_{jj+1}]&=-e_{jj+1}, \\
  [h_{j+1},e_{jj+1}]&=-e_{jj+1},\\
   [h_{j-1},e_{j+1j}]&=e_{j+1j}, \\
  [h_{j+1},e_{j+1j}]&=e_{j+1j}.
\end{align*}
We have verified relation \eqref{S3}.
Finally we need to check the relations \eqref{S+} and \eqref{S-}. Specifically, we need to check the following:
\begin{align}
[e_{jj+1},e_{kk+1}]=0, & \quad \text{ if }  k \notin \{  j-1, j+1\} \label{Serre1}, \\
[e_{j+1j},e_{k+1k}]=0, & \quad \text{ if }  k \notin \{  j-1, j+1\} \label{Serre2}, \\
[e_{jj+1},[e_{jj+1},e_{kk+1}]]=0, & \quad \text{ if }  k \in \{  j-1, j+1\} \label{Serre3}, \\
[e_{j+1j},[e_{j+1j},e_{k+1k}]]=0, & \quad \text{ if }  k \in \{  j-1, j+1\}  \label{Serre4}.
\end{align}
The relations (\ref{Serre1}) and (\ref{Serre2}) follow by Lemma \ref{commutator}. The relations (\ref{Serre3} )and (\ref{Serre4}) can be proven as follows. 
\begin{align*}
[e_{jj+1},[e_{jj+1},e_{kk+1}]]&=\sigma([e_{j-1j},[e_{j-1j},e_{k-1k}]])=\sigma(0)=0,\\
[e_{j+1j},[e_{j+1j},e_{k+1k}]]&=\sigma([e_{jj-1},[e_{jj-1},e_{kk-1}]])=\sigma(0)=0.
\end{align*}
Here we used the $\mathfrak{sl}_{n-2}$ relations. 
We have shown that the algebra generated by  $\{ e_{i i+1} , e_{ii+1},h_i| i\in [n-1]\}$ is homomorphic to $\mathfrak{sl}_{n-1}$. To show that it is in fact isomorphic we need to prove that the map from  $\mathfrak{sl}_{n-1}$ into $\mathcal{R}_n(\mathfrak{h})$ is injective. This is seen by the same argument as before based on the simplicity of $\mathfrak{sl}_{n-1}$ and thus we have an embedding
\end{proof}

We state the following corollary.
\begin{corollary}\label{isomorph} As Lie algebras over the field $\mathbb{K}:=\mathbb{R}(a_1,\dots, a_n)$ we have the following isomorphism:
\[
\mathcal{R}_n(\mathfrak{h},[])\cong \mathfrak{sl}_{n-1}\oplus \mathbb{K}^{n+1}.
\]
\end{corollary}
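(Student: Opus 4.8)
The plan is to realize $\mathcal{R}_n(\mathfrak{h},[])$ as an internal direct sum of the copy of $\mathfrak{sl}_{n-1}$ furnished by Theorem \ref{sln} and an abelian factor built from the central generators. Set $\mathcal{Z}:=\mathrm{span}_{\mathbb{K}}\{Q_1,\dots,Q_n,Q_{[n]}\}$. By Lemma \ref{commutator} and the remark following it, each $Q_i$ and $Q_{[n]}$ is central in $\mathcal{R}_n(\mathfrak{h},[])$, so $\mathcal{Z}$ is an abelian ideal on which all brackets vanish. I would first check that these $n+1$ operators are $\mathbb{K}$-linearly independent: expanding $Q_{[n]}=a_{[n]}A_{0,[n]}-A_{+,[n]}A_{-,[n]}$ produces off-diagonal terms $A_{+,i}A_{-,j}$ with $i\neq j$ that occur in no $Q_m$, while the diagonal pieces $A_{+,m}A_{-,m}$ of the distinct $Q_m$ lie in different tensor factors; comparing coefficients of these fixed monomials in a PBW basis of $\mathcal{U}(\mathfrak{h})^{\otimes n}$ forces all coefficients of a relation to vanish, so $\dim_{\mathbb{K}}\mathcal{Z}=n+1$ and $\mathcal{Z}\cong\mathbb{K}^{n+1}$ as a Lie algebra.

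Next I would treat the semisimple part. Because the scalars $\lambda_k$ are square roots and hence lie outside $\mathbb{K}$, the operators $e_{k\,k+1},e_{k+1\,k}$ are not themselves elements of the $\mathbb{K}$-algebra; I would instead use their rational representatives $\tilde e_{k\,k+1}:=\lambda_k^{-1}e_{k\,k+1}$ and $\tilde e_{k+1\,k}:=\lambda_k^{-1}e_{k+1\,k}$, which are genuine $\mathbb{K}$-combinations of commutators of the $Q_K$, together with the already-rational $h_k$. Setting $E_k:=\tilde e_{k\,k+1}$, $F_k:=\lambda_k^{2}\tilde e_{k+1\,k}$ and $H_k:=h_k$ (all with coefficients in $\mathbb{K}$, since $\lambda_k^{2}\in\mathbb{K}$), the relations verified in Theorem \ref{sln} rescale to the standard Chevalley--Serre relations of $\mathfrak{sl}_{n-1}$ over $\mathbb{K}$: in each identity the $\lambda$-factors either cancel between the two sides or multiply a vanishing right-hand side, so no irrational scalar survives. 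Let $\mathfrak{g}\subset\mathcal{R}_n(\mathfrak{h},[])$ be the $\mathbb{K}$-subalgebra they generate; this exhibits a split $\mathbb{K}$-form, so $\mathfrak{g}\cong\mathfrak{sl}_{n-1}$ with $\dim_{\mathbb{K}}\mathfrak{g}=(n-1)^2-1=n^2-2n$, and $\mathfrak{g}$ is simple.

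I would then assemble the decomposition by a dimension count. Since $\mathfrak{g}$ is simple its centre is trivial, and any element of $\mathfrak{g}\cap\mathcal{Z}$ centralizes $\mathfrak{g}$, hence lies in $Z(\mathfrak{g})=0$; thus $\mathfrak{g}\cap\mathcal{Z}=0$ and $\dim_{\mathbb{K}}(\mathfrak{g}\oplus\mathcal{Z})=(n^2-2n)+(n+1)=n^2-n+1$. On the other hand the generating set \eqref{engendre} spans $\mathcal{R}_n(\mathfrak{h},[])$ over $\mathbb{K}$ with exactly $n+\binom{n}{2}+\binom{n-1}{2}=n^2-n+1$ elements, so $\dim_{\mathbb{K}}\mathcal{R}_n(\mathfrak{h},[])\le n^2-n+1$. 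The inclusion $\mathfrak{g}\oplus\mathcal{Z}\subseteq\mathcal{R}_n(\mathfrak{h},[])$ then forces equality throughout, which simultaneously proves that \eqref{engendre} is a basis and that $\mathcal{R}_n(\mathfrak{h},[])=\mathfrak{g}\oplus\mathcal{Z}$ as vector spaces. Finally, centrality of $\mathcal{Z}$ gives $[\mathfrak{g},\mathcal{Z}]=0$, so the splitting is one of Lie algebras and yields $\mathcal{R}_n(\mathfrak{h},[])\cong\mathfrak{sl}_{n-1}\oplus\mathbb{K}^{n+1}$.

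The main obstacle, I expect, is the bookkeeping over the non-algebraically-closed field $\mathbb{K}$: one must confirm that passing to the rational multiples of the $e$'s genuinely produces a Chevalley basis with coefficients in $\mathbb{K}$ (so that $\mathfrak{g}$ is the split form, not merely a $\mathbb{K}$-form of $\mathfrak{sl}_{n-1}$), and one must secure the $\mathbb{K}$-linear independence of the central operators so that the dimension count is tight rather than a mere upper bound. Everything else is soft: the direct-sum structure follows at once from simplicity of $\mathfrak{g}$, centrality of $\mathcal{Z}$, and the matching of dimensions supplied by \eqref{engendre}.
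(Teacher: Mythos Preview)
Your argument follows the same strategy as the paper's: bound $\dim_{\mathbb{K}}\mathcal{R}_n(\mathfrak{h},[])$ from above by the cardinality of the spanning set \eqref{engendre}, bound it from below by exhibiting $\mathfrak{sl}_{n-1}\oplus\mathcal{Z}$ inside (using simplicity for the trivial intersection), and match dimensions. Two minor differences are worth noting: the paper verifies linear independence of $\{Q_1,\dots,Q_n,Q_{[n]}\}$ by an index-shifting/commutator trick rather than your PBW inspection, and the paper does not address the issue you raise about $\lambda_k\notin\mathbb{K}$ --- your rescaling to a genuine split $\mathbb{K}$-form of $\mathfrak{sl}_{n-1}$ is a real improvement on the paper's presentation, which tacitly works over a quadratic extension when invoking Theorem~\ref{sln}.
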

\begin{proof}
The set (\ref{engendre}) generates $\mathcal{R}_n(\mathfrak{h},[])$ as a vector space over $\mathbb{K}:=\mathbb{R}(a_1,\dots, a_n)$ but we do not know yet if this is a basis. We have $n+1$ central elements, $\frac{n(n-1)}{2}$ elements of the form $Q_{ij}$ and $\frac{(n-1)(n-2)}{2}$ elements of the form $[Q_{1j},Q_{jk}]$. We have therefore
\begin{equation}\label{upperboundRacah}
 \dim(\mathcal{R}_n(\mathfrak{h},[]))\leq n+1+ \frac{n(n-1)}{2}+\frac{(n-1)(n-2)}{2}=n^2-n+1.
\end{equation}
By Theorem \ref{sln} we know there exists a subalgebra of $\mathcal{R}_n(\mathfrak{h},[])$ isomorphic with $\mathfrak{sl}_{n-1}$. Denote this Lie algebra by $\mathfrak{s}$. The dimension of $\mathfrak{s}$ equals $(n-1)^2-1$. We add to this algebra $\mathfrak{s}$ the vector space generated by the central elements 
\[Z:=\langle\{Q_i\,|\,i \in [n]\} \cup \{Q_{[n]}\}\rangle. \]
First notice that $\mathfrak{s} \cap Z= \{0\}$. The algebra $\mathfrak{s}$ is simple so it does not contain central elements. Second we see that $\{Q_i\,|\,i \in [n]\} \cup \{Q_{[n]}\}$ is a basis for $Z$. If this was not the case, we could find a linear combination of the central elements equal to $0$. We show that this is not possible. The $Q_i$ are linearly independent by construction. Assume we have the linear combination:
\[ 
 Q_{[n]}=\sum_{i=1}^n\lambda_i Q_i.
\]
Act with $\mu^*\otimes\bigotimes_{i=1}^{n-1}1$ and take the commutator with $Q_{23}$. We find
\begin{align*}
0= [Q_{23},Q_{[n+1]}]&=\left[ Q_{23},\lambda_1Q_{12}+\sum_{i=2}^n\lambda_i Q_{i+1}\right] \\
 					&=\lambda_1[Q_{23},Q_{12}].
\end{align*}
The element $[Q_{23},Q_{12}]$ does not equal $0$ so $\lambda_1=0$. Similarly one can show that all $\lambda_i=0$ concluding that $\{Q_i \, |\, i \in [n]\} \cup \{Q_{[n]}\}$ is indeed a basis for $Z$. The dimension of $Z$ equals $n+1$. We have 
\[\mathfrak{s} \oplus Z \subset \mathcal{R}_n(\mathfrak{h},[]). \]
From this follows that 
\[
 n^2-n+1=\dim(\mathfrak{s} \oplus Z) \leq \dim(\mathcal{R}_n(\mathfrak{h},[])).
\]
We already have an upper bound by inequality (\ref{upperboundRacah}). Therefore it follows that $\dim(\mathcal{R}_n(\mathfrak{h},[]))=n^2-n+1$ and  hence
\[
 \mathcal{R}_n(\mathfrak{h},[])=\mathfrak{s}\oplus Z \cong \mathfrak{sl}_{n-1} \oplus \mathbb{K}^{n+1}.
\]
This concludes the proof.
\end{proof}
Theorem \ref{sln} and Corollary \ref{isomorph} both follow from the set of equations calculated in section \ref{sectiondef}. As a consequence of the isomorphism found in Corollary \ref{isomorph} we give an alternative definition of  $\mathcal{R}_n(\mathfrak{h},[])$. It is the algebra defined over $\mathbb{R}(a_1,\dots,a_n)$ with generators $Q_i$ and $Q_{ij}$ satisfying the set of equalities \eqref{Commsum}, \eqref{relation1} and \eqref{relation2},  together with Lemma \ref{commutator} and Lemma \ref{LinearDependence}.

\subsection{Labelling Abelian algebras}
In the proof of Theorem \ref{sln} the elements of the Cartan algebra of $\mathfrak{sl}_{n-1}$ are linear combinations of the central elements and $\{ Q_{[k]} \,|\, 2 \leq k \leq n-1\}$. It is possible to construct different Cartan algebras if one starts from a different set of generators.
\begin{definition}
A set of non-empty subsets of $[n]$ $\mathcal{A}$ is a maximal non-intersecting/ nested set if it satisfies the following properties:
\begin{itemize}
 \item  Every pair of sets in $\mathcal{A}$ is either disjoint or one is included in the other. 
 \item  Maximality: It is not possible to add another subset of $[n]$ to $\mathcal{A}$ without contradicting the first property.
\end{itemize}
\end{definition}
By Lemma \ref{commutator} the corresponding generators $Q_A$ with $A \in \mathcal{A}$ will be a set of commuting operators. To each maximally non-intersecting/nested set $\mathcal{A}$ we can associate a graph $\mathbb{T}_\mathcal{A}$. Let every vertex of the graph represent a set in $\mathcal{A}$. There is an edge between to vertices $A$ and $B$ if either set is included in the other but there is no set $C \in \mathcal{A}$ such that $A \subset C \subset B$ or $B \subset C \subset A$.
\begin{lemma}
The graph $\mathbb{T}_\mathcal{A}$ related to the maximal non-intersecting/nested set $\mathcal{A}$ is a perfect binary tree.
\end{lemma}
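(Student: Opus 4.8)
The plan is to recognize $\mathbb{T}_\mathcal{A}$ as the Hasse diagram of $\mathcal{A}$ ordered by inclusion and to extract the tree structure from the laminarity (non-intersecting/nested) condition together with maximality. First I would pin down the root and the leaves. A singleton $\{i\}$ is contained in or disjoint from every subset of $[n]$, hence compatible with every member of $\mathcal{A}$; maximality therefore forces $\{i\}\in\mathcal{A}$ for each $i\in[n]$. The same reasoning applied to $[n]$, which contains every set, forces $[n]\in\mathcal{A}$. Thus $\mathcal{A}$ has a unique maximal element $[n]$ and exactly $n$ minimal elements, namely the singletons.

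Next I would verify that $\mathbb{T}_\mathcal{A}$ is a tree rooted at $[n]$. Each non-maximal $A\in\mathcal{A}$ has a unique minimal proper superset in $\mathcal{A}$ (its parent): if $A\subseteq B_1$ and $A\subseteq B_2$, then $B_1\cap B_2\supseteq A\neq\emptyset$, so by the laminar property $B_1$ and $B_2$ are nested, and two nested minimal supersets must coincide. This parent map yields a forest whose only root is $[n]$, so $\mathbb{T}_\mathcal{A}$, whose edges are exactly the covering (immediate-nesting) relations, is connected and acyclic.

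The heart of the argument is that every internal vertex has exactly two children. Fix $A\in\mathcal{A}$ with $|A|\geq 2$ and let $B_1,\dots,B_m$ be its children. I would first note that the children partition $A$: every $x\in A$ has $\{x\}\in\mathcal{A}$ lying below some child, so the children cover $A$, while laminarity and the maximality of each $B_i$ among proper subsets of $A$ force them to be pairwise disjoint. This already excludes $m=1$, since a single child covering $A$ would equal $A$. The main obstacle is ruling out $m\geq 3$, and here maximality does the work: if $m\geq 3$ I claim $B_1\cup B_2$ can be adjoined to $\mathcal{A}$ without violating laminarity. Indeed, any $S\in\mathcal{A}$ is nested with or disjoint from $A$; if $S\supseteq A$ or $S$ is disjoint from $A$ the same holds for $B_1\cup B_2$, whereas if $S\subsetneq A$ then, following a saturated chain up to $A$, $S$ lies inside a single child $B_i$, so $S\subseteq B_1\cup B_2$ when $i\in\{1,2\}$ and $S$ is disjoint from $B_1\cup B_2$ otherwise. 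Since $B_1\subsetneq B_1\cup B_2\subsetneq A$ shows $B_1\cup B_2\notin\mathcal{A}$, this contradicts maximality, forcing $m=2$.

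Combining these steps, $\mathbb{T}_\mathcal{A}$ is a rooted tree with root $[n]$, whose leaves are the singletons and each of whose internal vertices has exactly two children, i.e. the asserted binary tree. I expect the delicate point to be the case analysis verifying that $B_1\cup B_2$ remains laminar with every member of $\mathcal{A}$, since that is precisely where the characterization of children as the maximal proper subsets of $A$ is used.
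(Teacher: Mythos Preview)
Your proposal is correct and follows essentially the same approach as the paper: both argue that maximality forces $[n]$ and all singletons into $\mathcal{A}$, and both rule out $m\geq 3$ children by adjoining $B_1\cup B_2$. Your treatment is more thorough---you explicitly verify the unique-parent property, the partition of $A$ by its children, and the laminarity of $B_1\cup B_2$ with every $S\in\mathcal{A}$---whereas the paper leaves these checks implicit; the only substantive difference is that the paper disposes of the $m=1$ case by adjoining $A\setminus B$ rather than via your partition argument.
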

\begin{proof}
The set $[n]$ is always included in $\mathcal{A}$ by maximality. The set $[n]$ includes every set in $\mathcal{A}$ and will be the root of the tree. The sets with one element are also included in $\mathcal{A}$ by maximality. They are either completely included or disjoint with every set in $\mathcal{A}$. These sets do not have any proper subsets so they cannot have any children and are therefore leaves of the tree. Every other vertex has two children. Assume this is not the case. Let $A$ have exactly one child $B$. Then we can add $A\backslash B$ to $\mathcal{A}$ contradicting the maximality of $\mathcal{A}$. Assume $A$ has more than two children. Let $B_1$, $B_2$ and $B_3$ three sets that are children of $A$. We can add the set $B_1 \cup B_2$ to $\mathcal{A}$ contradicting the maximality of $\mathcal{A}$. This concludes the proof.
\end{proof}
We have $n$ indices so we have $n$ leaves. A perfect binary tree with $n$ leaves has $n-2$ interior vertices. The generators related to the interior vertices constitute an Abelian algebra with $n-2$ generators.
\begin{definition}
Let $\mathcal{A}$ be a maximally non-intersecting/nested set of $[n]$. Then we define the labelling Abelian algebra associated to $\mathcal{A}$ to be:
\[
\mathcal{Y}_{\mathcal{A}}=\{ Q_A  \,| \, A \in \mathcal{A} \text{ and } 1<|A|<n\}.
\]
\end{definition}
We exclude the elements $Q_i$ and $Q_{[n]}$ as they are central. For each maximally non-intersecting/nested set $\mathcal{A}$ we have constructed a tree $\mathbb{T}_\mathcal{A}$ and an algebra $\mathcal{Y}_\mathcal{A}$. Often we will represent the algebra  $\mathcal{Y}_\mathcal{A}$ by the tree $\mathbb{T}_\mathcal{A}$. These trees are similar to the coupling trees introduced in \cite{Vanderjeugt-2003}.
\begin{example}
Consider $\mathcal{R}_3(\mathfrak{h})$. We have three indices $1$, $2$ and $3$. This gives us three possible trees and therefore three different labelling Abelian algebras:
\begin{center}
\begin{tikzpicture}
\filldraw 	(0,0) circle (2pt) node[anchor=north] {1}
		(0.5,0.8) circle (2pt) node[anchor=east] {12}
		(1,1.6) circle (2pt)node[anchor=south]{123}
		(1,0) circle (2pt) node[anchor=north]{2}
		(2,0) circle (2pt) node[anchor=north]{3};
\draw (0,0)--(1,1.6);
\draw (0.5,0.8)--(1,0);
\draw (1,1.6)--(2,0);
\draw (1,-1)node {$\mathcal{Y}_{\mathcal{A}}=\{Q_{12}\}$};

\filldraw 	(3,0) circle (2pt) node[anchor=north] {1}
		(4.5,0.8) circle (2pt) node[anchor=east] {23}
		(4,1.6) circle (2pt)node[anchor=south]{123}
		(4,0) circle (2pt) node[anchor=north]{2}
		(5,0) circle (2pt) node[anchor=north]{3};
\draw (3,0)--(4,1.6);
\draw (4.5,0.8)--(4,0);
\draw (4,1.6)--(5,0);
\draw (4,-1)node {$\mathcal{Y}_{\mathcal{A}}=\{Q_{23}\}$};

\filldraw 	(6,0) circle (2pt) node[anchor=north] {1}
		(6.5,0.8) circle (2pt) node[anchor=east] {13}
		(7,1.6) circle (2pt)node[anchor=south]{123}
		(7,0) circle (2pt) node[anchor=north]{3}
		(8,0) circle (2pt) node[anchor=north]{2};
\draw (6,0)--(7,1.6);
\draw (6.5,0.8)--(7,0);
\draw (7,1.6)--(8,0);
\draw (7,-1)node {$\mathcal{Y}_{\mathcal{A}}=\{Q_{13}\}$};
		
\end{tikzpicture}
\end{center}
\end{example}

\begin{example}
Consider $\mathcal{R}_4(\mathfrak{h})$. We have four indices $1$, $2$, $3$ and $4$ so we have trees with four leaves. We consider two examples:

\begin{center}
\begin{tikzpicture}
\filldraw 	(0,0) circle (2pt) node[anchor=north] {1}
		(0.5,0.8) circle (2pt) node[anchor=east] {12}
		(1,1.6) circle (2pt)node[anchor=east]{123}
		(1,0) circle (2pt) node[anchor=north]{2}
		(2,0) circle (2pt) node[anchor=north]{3}
		(1.5,2.4) circle (2pt) node[anchor=south] {1234}
		(3,0) circle (2pt) node[anchor=north]{4};
\draw (0,0)--(1.5,2.4);
\draw (0.5,0.8)--(1,0);
\draw (1,1.6)--(2,0);
\draw (1.5,2.4)--(3,0);
\draw (1.5,-1)node {$\mathcal{Y}_{\mathcal{A}}=\{Q_{12},Q_{123}\}$};

\filldraw 	(4,0) circle (2pt) node[anchor=north] {1}
		(4.5,0.8) circle (2pt) node[anchor=east] {12}
		(6.5,0.8) circle (2pt)node[anchor=east]{34}
		(5,0) circle (2pt) node[anchor=north]{2}
		(6,0) circle (2pt) node[anchor=north]{3}
		(5.5,2.4) circle (2pt) node[anchor=south] {1234}
		(7,0) circle (2pt) node[anchor=north]{4};
\draw (4,0)--(5.5,2.4);
\draw (4.5,0.8)--(5,0);
\draw (6.5,0.8)--(6,0);
\draw (5.5,2.4)--(7,0);
\draw (5.5,-1)node {$\mathcal{Y}_{\mathcal{A}}=\{Q_{12},Q_{34}\}$};

\end{tikzpicture}
\end{center}
\end{example}
We are now in a position to explain the relation between these labelling Abelian algebras and the Cartan algebra of $\mathfrak{sl_{n-1}}$ inside $\mathcal{R}_n(\mathfrak{h})$. Assume we have fixed a labelling Abelian algebra $\mathcal{Y}_{\mathcal{A}}$. For every set $A \in \mathcal{A}$ that is not the root or a leaf, we will construct three generators $e_A$, $f_A$ and $h_A$. To do this consider the tree $\mathbb{T}_\mathcal{A}$. Focus on the subtree consisting of the vertex related to the set $A$: The children of $A$ which we will call $K$ and $L$, the parent $B$ of $A$ and the child of $B$ differing from $A$ denoted by $M$. We have two possibilities:
\begin{center}
\begin{tikzpicture}
\filldraw 	(0,0) circle (2pt) node[anchor=north] {K}
		(0.5,0.8) circle (2pt) node[anchor=east] {A}
		(1,1.6) circle (2pt)node[anchor=south]{B}
		(1,0) circle (2pt) node[anchor=north]{L}
		(2,0) circle (2pt) node[anchor=north]{M};
\draw (0,0)--(1,1.6);
\draw (0.5,0.8)--(1,0);
\draw (1,1.6)--(2,0);

\filldraw 	(3,0) circle (2pt) node[anchor=north] {M}
		(4.5,0.8) circle (2pt) node[anchor=east] {A}
		(4,1.6) circle (2pt)node[anchor=south]{B}
		(4,0) circle (2pt) node[anchor=north]{K}
		(5,0) circle (2pt) node[anchor=north]{L};
\draw (3,0)--(4,1.6);
\draw (4.5,0.8)--(4,0);
\draw (4,1.6)--(5,0);
\end{tikzpicture}
\end{center}
We define $e_A$ and $f_A$ as follows:
\begin{align}\label{sl2A}
\begin{split}
e_A:=\lambda_A([Q_{A},[Q_{A},Q_{LM}]]+a_A[Q_{A},Q_{LM}] ), \\
f_A:=\lambda_A([Q_{A},[Q_{A},Q_{LM}]]-a_A[Q_{A},Q_{LM}] ).
\end{split}
\end{align}
with 
\[
 \lambda_A=\frac{1}{\sqrt{4a_Ka_La_Ma_{A}^2a_{B}}}.
\]
Observe that it is possible to replace $Q_{LM}$ by $Q_{KM}$ in the definitions of $e_A$ and $f_A$ leading to different but equally correct expressions for $e_A$ and $f_A$. To avoid this ambiguity we always choose $L$ to be the right child of $A$ and $M$ the child of $B$ different from $A$. This, however, fixes an ordering of the leaves. Otherwise it is not possible to speak of a left and right child.
Together with the Cartan elements $h_A=[e_A,f_A]$ the set $\{ e_A, f_A ,h_A\}$ generates the Lie algebra $\mathfrak{sl}_{n-1}$. This can be proven by repeating the arguments in  Theorem \ref{sln}. The maximally non-intersecting/nested set used in Theorem \ref{sln} is $ \{ [k]\,| \,1<k<n\}$.
To find the relation between the Cartan algebra $\{h_A\,|\, A \in \mathcal{A} \text{ and } 1<|A|<n \}$ and the labelling Abelian algebra $\mathcal{Y}_{\mathcal{A}}$ we consider explicitly $h_A$:
\[
h_{A}=\frac{2Q_A}{a_A}-\frac{Q_B}{a_B}-\frac{Q_{K}}{a_{K}}-\frac{Q_{L}}{a_{L}}+\frac{Q_{M}}{a_{M}}.
\]
Every Cartan element can be written as a linear combination of elements of the labelling Abelian algebra $\mathcal{Y}_{\mathcal{A}}$ and central elements $Q_i$ and $Q_{[n]}$.

In the next section we will study the representation theory behind $\mathcal{R}_n(\mathfrak{h})$ and its relation to the Lie algebra $\mathfrak{sl}_{n-1}$. The relation between Cartan algebras and labelling Abelian algebras will play an important role there.

\section{Connection between recoupling coefficients for $\mathfrak{h}$ and $\mathfrak{sl}_{n-1}$-representations} \label{representationtheory}
Assume we have a finite dimensional irreducible representation $V$ of $\mathcal{R}_n(\mathfrak{h})$. Because of corollary \ref{isomorph} this is also an irreducible representation for $\mathfrak{sl}_{n-1}$. In this section we study bases diagonalized by different labelling Abelian algebas or equivalently Cartan algebras of $\mathfrak{sl}_{n-1}$ and their connection coefficients. We first study the rank one case.

\subsection{$\mathfrak{sl}_2$ and the Krawtchouk polynomials}
Consider the following two Abelian algebras of $\mathcal{R}_3(\mathfrak{h})$:
\begin{center}
\begin{tikzpicture}
\filldraw 	(0,0) circle (2pt) node[anchor=north] {$1$}
		(0.5,0.8) circle (2pt) node[anchor=east] {$12$}
		(1,1.6) circle (2pt)node[anchor=south]{$123$}
		(1,0) circle (2pt) node[anchor=north]{$2$}
		(2,0) circle (2pt) node[anchor=north]{$3$};
\draw (0,0)--(1,1.6);
\draw (0.5,0.8)--(1,0);
\draw (1,1.6)--(2,0);
\draw (1,-1)node {$\mathcal{Y}_{\mathcal{A}_1}=\{Q_{12}\}$};

\filldraw 	(3,0) circle (2pt) node[anchor=north] {$1$}
		(4.5,0.8) circle (2pt) node[anchor=east] {$23$}
		(4,1.6) circle (2pt)node[anchor=south]{$123$}
		(4,0) circle (2pt) node[anchor=north]{$2$}
		(5,0) circle (2pt) node[anchor=north]{$3$};
\draw (3,0)--(4,1.6);
\draw (4.5,0.8)--(4,0);
\draw (4,1.6)--(5,0);
\draw (4,-1)node {$\mathcal{Y}_{\mathcal{A}_2}=\{Q_{23}\}$};
		
\end{tikzpicture}
\end{center}
Let $\{ \psi_k \}$ be an eigenbasis of $Q_{12}$ and $\{ \phi_s \}$ be an eigenbasis of $Q_{23}$. The indices $k$ and $s$ run form $0$ to $N$ with $\dim(V)=N+1$. We are interested in the overlap coefficients $B_{ks}$ between these bases:
\begin{equation}\label{overlapsl2}
 \phi_s=\sum_{k=0}^N B_{sk}\psi_k.
\end{equation}
To study these overlap coefficients we construct the Lie algebra $\mathfrak{sl}_2$ inside $\mathcal{R}_3(\mathfrak{h})$. Corresponding to $\mathcal{Y}_{\mathcal{A}_1}$, we have the following generators. See also formula (\ref{sl2gen}):
\begin{align*}
 e&:=\lambda([Q_{12},[Q_{12},Q_{23}]]+a_{12}[Q_{12},Q_{23}]), \\
 f&:=\lambda([Q_{12},[Q_{12},Q_{23}]]-a_{12}[Q_{12},Q_{23}]), \\
 h&:=\frac{2Q_{12}}{a_{12}}-\frac{Q_{123}}{a_{123}}-\frac{Q_1}{a_1}-\frac{Q_2}{a_2}+\frac{Q_3}{a_3}
 \end{align*}
 with $\lambda=\sqrt{4a_1a_2 a_3 a_{12}^2 a_{123}}^{-1}$.
 The $\mathfrak{sl}_2$ Lie algebra related to $\mathcal{Y}_{\mathcal{A}_2}$ is given by replacing indices $(1 \rightarrow 2 \rightarrow 3 \rightarrow 1)$ in $e$, $f$ and $h$ given above. We denote these elements by $\tilde{e}$, $\tilde{f}$ and $\tilde{h}$. The elements $h$ and $\tilde{h}$ diagonalize the bases $\{ \psi_k \}$ and $\{ \phi_s \}$ respectively. 
  \[
    h\psi_k=\mu_k\psi_k, \qquad \tilde{h}\phi_s=\nu_s\phi_s.
 \]
The map $\tilde{.}$ is an automorphism of $\mathfrak{sl}_2$ so we can write $\tilde{h}$ as a linear combination of $h$, $e$ and $f$. 
\[
  \tilde{h}=R_h h +R_e e+ R_f f
\]
with
\begin{align*}
 R_h&=-\frac{a_1a_2-a_1a_3+a_2^2+a_2a_3}{a_{12}a_{23}}, \\
 R_e&=R_f=2\frac{\sqrt{a_1a_2a_3a_{123}}}{a_{12}a_{23}} .
\end{align*}
Observe that $R_eR_f+R_h^2=1$.
It is a classical result that the overlap coefficients between two bases related by an inner automorphism of $\mathfrak{sl}_2$ are univariate Krawtchouk polynomials \cite{GVZ5,Koorn3}. We have therefore relegated the calculations to Appendix \ref{Overlap}. This is the result:
\[
  B_{sk} \sim K_k\left(\frac{\nu_s+N}{2}; \frac{1-R_h}{2},N\right).
\]
The Krawtchouk polynomials depend on $2$ values:  $R_h$ and $N$. The number $N$ is equal by definition to $\dim(V)-1$. The dimension of an irreducible representation of $\mathfrak{sl}_2$ can be found by considering its Casimir:
\begin{align}\label{Casimir-sl2}
\begin{split}
 C&:=\frac{h^2}{2}+ef+fe \\
 	&=\frac{1}{2}\left(\frac{Q_{1}}{a_1}+\frac{Q_2}{a_2}+\frac{Q_3}{a_3}-\frac{Q_{123}}{a_{123}}\right)\left(\frac{Q_{1}}{a_1}+\frac{Q_2}{a_2}+\frac{Q_3}{a_3}-\frac{Q_{123}}{a_{123}}-2\right).
\end{split}
\end{align}
If we act with the Casimir on $\psi_k$ we find:
\[
 C\psi_k=\frac{N^2+2N}{2}\psi_k.
\]
From the action of the Casimir $C$ of $\mathfrak{sl}_2$ or equivalently the central elements $Q_1$, $Q_2$, $Q_3$ and $Q_{123}$ we are able to discern the dimension of the representation and hence the number $N$. For the remainder of the article we will write the dependence on $R_h$ and $C$ explicitly: $B_{sk}(R_h,C)$.

\subsection{$\mathfrak{sl}_{n-1}$ and the multivariate Krawtchouk polynomials}
Let $\mathcal{Y}_{\mathcal{A}_1}$ and  $\mathcal{Y}_{\mathcal{A}_2}$ be two labelling Abelian algebras of $\mathcal{R}_n(\mathfrak{h})$. Additionally we demand that $\mathcal{A}_1$ and $\mathcal{A}_2$ differ by only one element:
\[
   \mathcal{A}_1 \backslash \mathcal{A}_2=\{G_1\}, \quad \mathcal{A}_2 \backslash \mathcal{A}_1=\{G_2\}.
\]
Let $\{\psi_{\vec k} \}$ be diagonalized by the labelling Abelian algebra $\mathcal{Y}_{\mathcal{A}_1}$ and $\{ \psi_{\vec s} \}$ be diagonalized by the labelling Abelian algebra $\mathcal{Y}_{\mathcal{A}_2}$. Let $\mathcal{A}_1=\{A_i \,|\, i=1\dots n-2\}$, then we have
\[
 Q_{A_i}\psi_{\vec k}=\mu^{A_i}_{k_i}\psi_{\vec k}.
\]
For a specific index $j$ it must be that $A_j=G_1$. We want to find the overlap coefficients between the bases $\{\psi_{\vec k} \}$ and $\{\phi_{\vec s} \}$. 
\[
 \phi_{\vec s}=\sum_{\vec k} B_{\vec s \vec k} \psi_{\vec k }.
\]
The basis vector $\phi_{ \vec s}$ is a common eigenvector of the operators $Q_{A_i}$, $i \neq j$ with eigenvalues $\mu^{A_i}_{s_i}$. The basis $\{\psi_{\vec k} \}$ also consists of eigenvectors of the operators $Q_{A_i}$, $i \neq j$. The vector  $\phi_{\vec s}$ must therefore be written as a linear combination of common eigenvectors of $Q_{A_i}$, $i \neq j$  with the same eigenvalues $\mu^{A_i}_{s_i}$, $i \neq j$. It follows that $B_{\vec s \vec k}=0$ if $k_i\neq s_i$ for some $i \neq j$. The overlap coefficient can be written as
\[
B_{\vec s \vec k}=B_{s_jk_j}\prod_{i \neq j} \delta_{s_ik_i} 
\]
where $\delta_{s_ik_i} $ is the Kronecker delta. To find $B_{s_jk_j}$ consider the common eigenspace $T=\{ v\in V\,| \, Q_{A_i}v=\mu^{A_i}_{s_i}v \text { for all } i \neq j\}$. Both $Q_{G_1}$ and $Q_{G_2}$ commute with each $Q_{A_i}$ so they preserve the common eigenspace $T$. In fact $Q_{G_1}$ and $Q_{G_2}$ lie in an algebra isomorphic to $\mathcal{R}_3(\mathfrak{h})$ that preserves $T$. Let $K=G_1\backslash G_2$, $L=G_1 \cap G_2$ and $M=G_2 \backslash G_1$. By Lemma \ref{Racah1} the algebra $\mathcal{R}_3^{K,L,M}(\mathfrak{h})$ generated by $\{ Q_K, Q_L, Q_M, Q_{KL},Q_{LM},Q_{KLM}\}$ is isomorphic to $\mathcal{R}_3(\mathfrak{h})$. It preserves $T$ as each generator commutes with $Q_{A_i}$ with $i \neq j$. In fact the sets $K$, $L$, $M$ and $K\cup L\cup M$ are in $\mathcal{A}_1 \cap \mathcal{A}_2$. We conclude that $T$ is a representation of  $\mathcal{R}_3(\mathfrak{h})$ with one basis $\{ \psi_{\vec k}\} \cap T$ diagonalized by $Q_{G_1}$ and the other $\{\phi_{\vec s}\} \cap T$ by $Q_{G_2}$. We are basically in the situation discussed in the previous paragraph and represented by the following two trees:
\begin{center}
\begin{tikzpicture}
\filldraw 	(0,0) circle (2pt) node[anchor=north] {$K$}
		(0.5,0.8) circle (2pt) node[anchor=east] {$G_1$}
		(1,1.6) circle (2pt)node[anchor=south]{$KLM$}
		(1,0) circle (2pt) node[anchor=north]{$L$}
		(2,0) circle (2pt) node[anchor=north]{$M$};
\draw (0,0)--(1,1.6);
\draw (0.5,0.8)--(1,0);
\draw (1,1.6)--(2,0);

\filldraw 	(3,0) circle (2pt) node[anchor=north] {$K$}
		(4.5,0.8) circle (2pt) node[anchor=west] {$G_2$}
		(4,1.6) circle (2pt)node[anchor=south]{$KLM$}
		(4,0) circle (2pt) node[anchor=north]{$L$}
		(5,0) circle (2pt) node[anchor=north]{$M$};
\draw (3,0)--(4,1.6);
\draw (4.5,0.8)--(4,0);
\draw (4,1.6)--(5,0);
		
\end{tikzpicture}
\end{center}
This means that the overlap coefficients are given by 
\[
 B_{s_jk_j}= B_{s_jk_j}(R_h^{K,L,M},C_{K,L,M}).
\]
The elements $R_h^{K,L,M}$ and $C_{K,L,M}$ are obtained by using the isomorphism between $\mathcal{R}_3(\mathfrak{h})$ and $\mathcal{R}_3^{K,L,M}(\mathfrak{h})$ obtained by replacing the indices $1$, $2$ and $3$ by $K$, $L$ and $M$:
\begin{align*}
R_h^{K,L,M}&=-\frac{a_Ka_L-a_Ka_M+a_L^2+a_La_M}{a_{KL}a_{LM}}, \\
C^{K,L,M}&=\frac{1}{2}\left(\frac{Q_{K}}{a_K}+\frac{Q_L}{a_L}+\frac{Q_M}{a_M}-\frac{Q_{KLM}}{a_{KLM}}\right)\left(\frac{Q_{K}}{a_K}+\frac{Q_L}{a_L}+\frac{Q_M}{a_M}-\frac{Q_{KLM}}{a_{KLM}}-2\right).
\end{align*}

 We have shown that the overlap coefficients between two bases diagonalized by labelling Abelian algebras differing by one generators are Krawtchouk polynomials. Let us remove the condition that the labelling Abelian algebras need to differ by one generator. Then the strategy to find the overlap coefficients is to find a series of intermediate bases in such a way that each intermediate basis differs by one generator with the next intermediate basis. For example take the labelling Abelian algebras $\langle Q_{12},Q_{34} \rangle$ and $\langle Q_{13},Q_{24} \rangle$ in $\mathcal{R}_4(\mathfrak{h})$. Then we can find a series of intermediate bases:
 \[
 \langle Q_{12},Q_{34} \rangle \rightarrow \langle Q_{12},Q_{123} \rangle \rightarrow \langle Q_{13},Q_{123} \rangle \rightarrow \langle Q_{13},Q_{24} \rangle.
 \]
 Each step gives us Krawtchouk polynomials as overlaps. More specifically let $\{\psi^1_{k_1k_2} \}$, $\{\psi^2_{k_1l_2} \}$, $\{\psi^3_{s_1l_2} \}$ and $\{\phi_{s_1s_2} \}$ be the bases diagonalized by each step in this chain. Then the overlap coefficients become:
 \begin{align*}
   \phi_{s_1s_2}& =\sum_{l_2} B_{s_2l_2}(R_h^{13,2,4},C^{13,2,4}) \psi^3_{s_1l_2} \\
  			&=\sum_{l_2 k_1} B_{s_2l_2}(R_h^{13,2,4},C^{13,2,4})B_{s_1k_1}(R_h^{1,2,3},C^{1,2,3}) \psi^2_{k_1l_2} \\
			&=\sum_{l_2 k_1 k_2} B_{s_2l_2}(R_h^{13,2,4},C^{13,2,4})B_{s_1k_1}(R_h^{1,2,3},C^{1,2,3}) B_{l_2 k_2}(R_h^{4,3,12},C^{4,3,12})\psi^1_{k_1k_2} .
 \end{align*}
 Here we conclude that the overlap coefficients are 
 \begin{equation}\label{overlapsl4}
  B_{\vec s \vec k}=\sum_{l_2} B_{s_2l_2}(R_h^{13,2,4},C^{13,2,4})B_{s_1k_1}(R_h^{1,2,3},C^{1,2,3}) B_{l_2 k_2}(R_h^{4,3,12},C^{4,3,12}).
 \end{equation}
 This gives us a method to calculate connection coefficients between any pair of bases diagonalized by labelling Abelian algebras. We do need to check if there is always a path between two labelling Abelian algebras. To this end we introduce the recoupling graph of $\mathcal{R}_n(\mathfrak{h})$. Let every labelling Abelian algebra be represented by a vertex. Two vertices are connected by an edge if they only differ by one generator.

 \begin{proposition}\label{connectiongraph}
 The recoupling graph of $\mathcal{R}_n(\mathfrak{h})$ is connected and its diameter is bounded by $\frac{(n-1)(n-2)}{2}$.
 \end{proposition}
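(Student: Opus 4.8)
The plan is to recast the statement in purely combinatorial terms. By the discussion preceding the proposition, a vertex of the recoupling graph is a maximal non-intersecting/nested set $\mathcal{A}$, equivalently (by the lemma identifying $\mathbb{T}_{\mathcal{A}}$ as a binary tree) a rooted binary tree with $n$ leaves labelled by $[n]$, whose interior vertices carry the generators of $\mathcal{Y}_{\mathcal{A}}$. First I would check that an edge of the recoupling graph is exactly an elementary tree rotation: if two maximal nested families agree except for one interior set, then deleting that set produces a single interior vertex of valence three, and the only ways to rebinarize it reproduce precisely the two local pictures drawn above, in which $Q_{KL}$ is replaced by $Q_{LM}$ or $Q_{KM}$. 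Thus "differ by one generator" is synonymous with a single rotation, and the proposition reduces to the assertion that the rotation graph on rooted binary trees with $n$ labelled leaves is connected of diameter at most $\frac{(n-1)(n-2)}{2}$.

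The diameter bound I would prove by induction on $n$, arranging that the inductive step costs at most $n-2$ rotations and reduces to the $(n-1)$-leaf problem, which is consistent since $\frac{(n-1)(n-2)}{2}-\frac{(n-2)(n-3)}{2}=n-2$. Given a source and target tree $T_1,T_2$, I would pick a cherry $\{a,b\}$ of $T_2$ (a deepest interior vertex always has two leaves as children), transform $T_1$ by rotations into a tree $T_1'$ in which $\{a,b\}$ is also a cherry, and then contract this common cherry to a single leaf, obtaining trees $\bar T_1,\bar T_2$ on $n-1$ leaves. The point is that contraction is a bijection on rotations: any rotation of the contracted trees lifts to a rotation of the originals that keeps $\{a,b\}$ a cherry (treating the cherry as a rigid unit), so $\mathrm{dist}(T_1',T_2)\le \mathrm{dist}(\bar T_1,\bar T_2)\le \frac{(n-2)(n-3)}{2}$ by the induction hypothesis.

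The crux — and the step I expect to be the main obstacle — is showing that a prescribed pair of leaves can be forced into a cherry in at most $n-2$ rotations, rather than the roughly $2n$ that a careless argument yields. For this I would use the leaf-to-leaf tree distance $\delta(a,b)$, the number of edges on the path joining $a$ and $b$, as a potential. Writing $P=\mathrm{lca}(a,b)$ and letting $s$ be the number of leaves below $P$, an elementary count gives $\delta(a,b)\le s\le n$; moreover, whenever $\delta(a,b)>2$, a single rotation performed just below $P$ — swapping the off-path child on the deeper side with the whole subtree on the other side — lowers the common ancestor and decreases $\delta(a,b)$ by exactly one, while staying strictly inside the subtree rooted at $P$. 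Iterating until $\delta(a,b)=2$ creates the cherry in $\delta(a,b)-2\le s-2\le n-2$ rotations. Combining the three ingredients gives $\mathrm{dist}(T_1,T_2)\le (n-2)+\frac{(n-2)(n-3)}{2}=\frac{(n-1)(n-2)}{2}$, and connectedness is then immediate from the finiteness of the diameter. The remaining care is bookkeeping: confirming that the cherry-forming rotations stay within $P$'s subtree, so that they do not disturb parts of the tree already matched at later inductive stages, and that the contraction/expansion correspondence is faithful on single rotations.
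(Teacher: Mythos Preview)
Your argument is correct and follows essentially the same inductive strategy as the paper: reduce to $n-1$ leaves by contracting a cherry, paying at most $n-2$ moves for the reduction. The only difference is the order of the two pieces --- the paper first contracts a cherry that already exists in the \emph{initial} tree, applies the inductive hypothesis, and only then spends at most $n-2$ swaps to move the now-loose leaf $b$ to its target position --- and the paper is less explicit than your potential-function argument about why $n-2$ moves suffice for that last step.
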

\begin{proof}
We use an argument applied on the binary trees related to labelling Abelian algebras. On these trees we have two operations:
\begin{itemize}
	\item A twist: interchanging two children (and their descendant trees) of a vertex.  This does not change the labelling Abelian algebra so on the connection graph we stay on the same vertex.
\begin{center}
\begin{tikzpicture}
\filldraw 	(0,0) circle (2pt) node[anchor=east]{$A$}
		(0.5,0.8) circle (2pt) 
		(1,0) circle (2pt)node[anchor=west]{$B$};
\draw (0,0)--(0.5,0.8);
\draw (0.5,0.8)--(1,0);

\draw (2,0.4) node{$\rightarrow$};

\filldraw 	(3,0) circle (2pt) node[anchor=east]{$B$}
		(3.5,0.8) circle (2pt) 
		(4,0) circle (2pt)node[anchor=west]{$A$};
\draw (3,0)--(3.5,0.8);
\draw (3.5,0.8)--(4,0);

\end{tikzpicture}
\end{center}

	\item A swap: Moving a vertex' child to the other edge. This changes the labelling Abelian algebra by one generator. On the connection graph we move along an edge to a new vertex.
\begin{center}
\begin{tikzpicture}
\filldraw 	(0,0) circle (2pt) 
		(1.5,0.8) circle (2pt) 
		(1,1.6) circle (2pt)
		(1,0) circle (2pt) 
		(2,0) circle (2pt) ;
\draw (0,0)--(1,1.6);
\draw (1.5,0.8)--(1,0);
\draw (1,1.6)--(2,0);

\draw (2.5, 0.8) node {$\rightarrow$};

\filldraw 	(3,0) circle (2pt) 
		(3.5,0.8) circle (2pt) 
		(4,1.6) circle (2pt)
		(4,0) circle (2pt) 
		(5,0) circle (2pt) ;
\draw (3,0)--(4,1.6);
\draw (3.5,0.8)--(4,0);
\draw (4,1.6)--(5,0);
		
\end{tikzpicture}
\end{center}
\end{itemize}
Because twisting does not change the labelling Abelian algebra, you stay on the same vertex of the connection graph. Every swap on the other hand is related to a step on the connection graph. The proof of connectedness in $\frac{(n-2)(n-1)}{2}$ steps uses induction:

Let $n=3$ and take any two trees with three leaves. It is easy to see that it takes a single swap combined with a number of twists to get from one tree to the other.

Assume we have proved it for $n-1$. We take two trees with $n$ leaves. We call these trees the initial tree and final tree. In the initial tree there is at least one vertex whose children are leaves. Assume these leaves are labelled $a$ and $b$. Remove those leaves and give the parent vertex the label $a$. We now have a tree with $n-1$ labelled leaves. We removed the label $b$. Remove this label $b$ also from the final tree together with its parent. The final tree now also has $n-1$ labelled leaves. By induction it takes $\frac{(n-2)(n-3)}{2}$ swaps and a number of twists to change the initial tree to the final tree. Add the leaf with label $b$ and its parent again to the final tree where it was removed. In the initial tree we add two leaves to the leaf with label $a$. We remove this label $a$ and add label $a$ and $b$ to the leaves.
Only leaf $b$ is in the wrong place. There are at most $n-2$ vertices in between leaf $b$ and were it needs to be in the final tree. It requires n-2 swaps and a number of twists to move leaf $b$ into the right position.
The total swaps used to change the initial tree into the final tree is 
\[
\frac{(n-2)(n-3)}{2}+n-2=\frac{(n-1)(n-2)}{2}.
\]
\end{proof}
\begin{remark}
This proof also works for the higher rank Racah algebra for $\mathfrak{su}(1,1)$ as in \cite{DeBie&Genest$vandeVijver&Vinet} effectively generalizing the connection graph and the proof of connectedness of the connection graph in \cite{DeBie&Genest$vandeVijver&Vinet}.
\end{remark}

We want to conclude this section with a special pair of labelling Abelian algebras of $\mathcal{Y}_1=\{ Q_{[k]}\,|\, 2\leq k \leq n-1\}$ and $\mathcal{Y}_{n-1}=\{ Q_{[2\dots k]}\,|\, 3\leq k \leq n\}$. Observe that $\mathcal{Y}_{n-1}$ can be obtained by from $\mathcal{Y}_1$ by cyclicly permuting the indices $i\rightarrow i+1$ and $n\rightarrow 1$. We can find a path between these two algebras as follows. Define
\[
 \mathcal{Y}_l:=\{ Q_{[k]}\,| \,l+1\leq k \leq n-1\}\cup\{ Q_{[2\dots k]}\,| \,3\leq k \leq l+1\}.
\]
These are labelling Abelian algebra with $\mathcal{Y}_{l-1} \backslash \mathcal{Y}_{l}= Q_{[l]}$ and $\mathcal{Y}_{l}\backslash \mathcal{Y}_{l-1}= Q_{[2\dots l+1]}$. If we determine the overlap associated to each step we find the following connection coefficients:
\[
 B_{\vec s\vec k}=\prod_{l=2}^{n-1} B_{s_l k_l}(R_h^{1,[2..l],l+1},C^{1,[2..l],l+1}).
\] 
These are the multivariate Krawtchouk polynomials of Tratnik type. They depend on $n-2$ parameters $R_h^{1,[2..l],l+1}$. They are constructed in the same way as the multivariate Racah polynomials were constructed as connections coefficients between labelling Abelian algebras for the Racah algebra in \cite{DeBie&Genest$vandeVijver&Vinet}. The number of parameters follows from the number of steps we needed to move through the connection graph. By proposition \ref{connectiongraph} we know that we can find paths up to $\frac{(n-1)(n-2)}{2}$ steps. Further in the paper we will show how to get the multivariate Krawtchouk polynomials of Griffiths type depending on $\frac{(n-1)(n-2)}{2}$ parameters.

\subsection{$6j$- and $9j$-symbols}
The $6j$-, $9j$- and in general the $3nj$-symbols for the oscillator algebra $\mathfrak{h}$ can be cast into the framework presented in this article as they are specific overlaps between recoupled bases. Consider the $6j$-symbols. Given the algebra  $\mathfrak{h}\oplus \mathfrak{h}\oplus\mathfrak{h}$, the $6j$-symbols or Racah coefficients are the connection coefficients for coupling the first two oscillator algebras and the last two:
\[
(\mathfrak{h}\oplus \mathfrak{h})\oplus\mathfrak{h} \rightarrow \mathfrak{h}\oplus( \mathfrak{h}\oplus\mathfrak{h}).
\] 
In our framework this is equivalent with finding the overlap between the bases diagonalized by $Q_{12}$ and $Q_{23}$. The $6j$-symbols are therefore up to a normalization equal to $B_{sk}(R_h,C)$. One could ask where the $6$ $j$'s of the  $6j$-symbols are. The indices $s$ and $k$ are related to the eigenvalues of $Q_{12}$ and $Q_{23}$ so they are related to $j_{12}$ and $j_{23}$. The number $\mathcal{R}_h$ is independent of the representation used. The remaining four $j$'s are hidden in the Casimir $C$. According to formula (\ref{Casimir}) the Casimir depends on $Q_1$, $Q_2$, $Q_3$ and $Q_{123}$ giving the remaining $j_1$, $j_2$, $j_3$ and $j_{123}$.

A similar analysis can be given for the $9j$-symbols. The $9j$-symbols were already identified as bivariate Krawtchouk polynomials in \cite{Zhedanov}. They are obtained by considering the algebra $\mathfrak{h}\oplus \mathfrak{h}\oplus\mathfrak{h}\oplus\mathfrak{h}$ and the following two bases: the first basis obtained by coupling first factor with the second and the third with the fourth and the second basis defined by coupling the first factor with the third and the second with the fourth.  i.e.
\[
 (1\oplus2)\oplus (3\oplus 4) \rightarrow (1\oplus 3 )\oplus (2 \oplus 4)  .
\]
In our framework this is equivalent with finding the overlap coefficients between the bases diagonalized by $\langle Q_{12}, Q_{34}\rangle$ and $\langle Q_{13},Q_{24} \rangle$. We already calculated the connection coefficients in formula (\ref{overlapsl4}). We repeat the solution here:
\[
 B_{\vec s \vec k}=\sum_{l_2} B_{s_2l_2}(R_h^{13,2,4},C^{13,2,4})B_{s_1k_1}(R_h^{1,2,3},C^{1,2,3}) B_{l_2 k_2}(R_h^{4,3,12},C^{4,3,12}).
\]
These numbers are multivariate Krawtchouk polynomials depending on three parameter $R_h^{13,2,4}$,$R_h^{1,2,3}$  and $R_h^{4,3,12}$. These numbers $B_{\vec s \vec k}$ are up to normalization equal to the $9j$-symbols. The $9$ $j$'s are found in the following way: $\vec s$ and $\vec k$ are related to the eigenvalues of $\{Q_{12}, Q_{34}\}$ and  $\{Q_{13},Q_{24}\}$ respectively. The others are found by considering the three Casimir elements appearing in the formula. We can add the following generators: 
\[\{Q_{1},Q_{2},Q_{3},Q_{4},Q_{123},Q_{1234} \}.\]
 This gives a total of $10$ numbers. This is one too many. The summation runs over $l_1$ which appears not only as an index but also in $C^{1,2,3}$. This number which is related to the eigenvalues of $Q_{123}$ can be considered as being summed away. This leaves us with $9$ generators related to the $9$ $j$'s in the $9j$-symbols. A similar analysis can be done for any $3nj$-symbol.

\subsection{Automorphisms of $\mathcal{R}_n(\mathfrak{h})$ and $\mathfrak{sl}_{n-1}$}
For each labelling Abelian algebra of $\mathcal{R}_n(\mathfrak{h})$ we are able to construct a set of operators that generate $\mathfrak{sl}_{n-1}$. By Corollary \ref{isomorph} these sets of operators must generate the same algebra. This leads to automorphisms of $\mathfrak{sl}_{n-1}$. In this section we will give a few examples. Additionally we will give the group element of Lie group $\textup{SO}_{n-1}$ corresponding to each automorphism.

\begin{example}\label{example1}
Take the algebra $\mathcal{R}_3(\mathfrak{h})$ and construct $\mathfrak{sl}_2$ from the labelling Abelian algebra $\{Q_{12}\}$: As per formula (\ref{sl2A}):
\begin{align*}
 e&:=\lambda([Q_{12},[Q_{12},Q_{23}]]+a_{12}[Q_{12},Q_{23}]), \\
 f&:=\lambda([Q_{12},[Q_{12},Q_{23}]]-a_{12}[Q_{12},Q_{23}]), \\
 h&:=[e,f].
 \end{align*}
 Consider the permutation $(1\leftrightarrow 2)$. This can be represented by a twist of two branches of our tree as in Proposition \ref{connectiongraph}:
\begin{center}
\begin{tikzpicture}
\filldraw 	(0,0) circle (2pt) node[anchor=north] {$1$}
		(0.5,0.8) circle (2pt) node[anchor=east] {$12$}
		(1,1.6) circle (2pt)node[anchor=south]{$123$}
		(1,0) circle (2pt) node[anchor=north]{$2$}
		(2,0) circle (2pt) node[anchor=north]{$3$};
\draw (0,0)--(1,1.6);
\draw (0.5,0.8)--(1,0);
\draw (1,1.6)--(2,0);
\draw (2.5,1) node{ $\rightarrow$ };

\filldraw 	(3,0) circle (2pt) node[anchor=north] {$2$}
		(3.5,0.8) circle (2pt) node[anchor=east] {$12$}
		(4,1.6) circle (2pt)node[anchor=south]{$123$}
		(4,0) circle (2pt) node[anchor=north]{$1$}
		(5,0) circle (2pt) node[anchor=north]{$3$};
\draw (3,0)--(4,1.6);
\draw (3.5,0.8)--(4,0);
\draw (4,1.6)--(5,0);
		
\end{tikzpicture}
\end{center}
Keeping in mind that $[Q_{12},Q_{13}]=-[Q_{12},Q_{23}]$ by formula (\ref{Fijk}), the images of the generators under this permutation are $\tilde{h}=h$, $\tilde{e}=-e$ and $\tilde{f}=-f$. 
The automorphism of $\textup{SL}_2$ is constructed as follows. Let
 \begin{equation}
U  \begin{pmatrix}
     \frac{h}{2} & f \\
     e & -\frac{h}{2}
  \end{pmatrix}U^{-1}=
   \begin{pmatrix}
     \frac{\tilde{h}}{2} & \tilde{f} \\
     \tilde{e} & -\frac{\tilde{h}}{2}
  \end{pmatrix}.
 \end{equation}
We want to solve for $U$ with $U \in \textup{SL}_2$. Conjugation by $U$ is the related automorphism of $\textup{SL}_2$. In this case 
\[
U_{(12)} =\begin{pmatrix} 1 & 0 \\ 0 & -1 \end{pmatrix}.
\]
\end{example}
\begin{example}\label{example2} 
 Next we construct $\mathfrak{sl}_2$ from the labelling Abelian algebra $\{Q_{23}\}$. We are constructing an isomorphism corresponding to these two trees:
 \begin{center}
\begin{tikzpicture}
\filldraw 	(0,0) circle (2pt) node[anchor=north] {$1$}
		(0.5,0.8) circle (2pt) node[anchor=east] {$12$}
		(1,1.6) circle (2pt)node[anchor=south]{$123$}
		(1,0) circle (2pt) node[anchor=north]{$2$}
		(2,0) circle (2pt) node[anchor=north]{$3$};
\draw (0,0)--(1,1.6);
\draw (0.5,0.8)--(1,0);
\draw (1,1.6)--(2,0);
\draw (2.5,1) node{ $\rightarrow$ };

\filldraw 	(3,0) circle (2pt) node[anchor=north] {$1$}
		(4.5,0.8) circle (2pt) node[anchor=west] {$23$}
		(4,1.6) circle (2pt)node[anchor=south]{$123$}
		(4,0) circle (2pt) node[anchor=north]{$2$}
		(5,0) circle (2pt) node[anchor=north]{$3$};
\draw (3,0)--(4,1.6);
\draw (4.5,0.8)--(4,0);
\draw (4,1.6)--(5,0);
		
\end{tikzpicture}
\end{center}
This is a swap as in Proposition \ref{connectiongraph}.
  We obtain the second set of generators by permuting the permutation $(1\rightarrow 2\rightarrow 3 \rightarrow 1)$. The transformed generators $\tilde{h}$, $\tilde{e}$ and $\tilde{f}$ are linear combinations of the original generators $h$, $e$ and $f$:
\begin{align*}
 \tilde{h}&=2\frac{\sqrt{a_1a_2a_3a_{123}}}{a_{12}a_{23}}(e+f)+\frac{a_1a_2+a_2 a_3+a_2^2-a_1a_3}{a_{12}a_{23}}h, \\
 \tilde{e}&=\frac{a_1a_3}{a_{12}a_{23}}e-\frac{a_2a_{123}}{a_{12}a_{23}}f-\frac{\sqrt{a_1a_2a_3a_{123}}}{a_{12}a_{23}}h,\\
 \tilde{f}&=-\frac{a_1a_3}{a_{12}a_{23}}e+\frac{a_2a_{123}}{a_{12}a_{23}}f-\frac{\sqrt{a_1a_2a_3a_{123}}}{a_{12}a_{23}}h .
\end{align*}
This set of equalities gives an automorphism of $\mathfrak{sl}_2$. We find the corresponding group element of $\textup{SO}_{2}$ by solving the following matrix equation:
 \begin{equation}
U  \begin{pmatrix}
     \frac{h}{2} & f \\
     e & -\frac{h}{2}
  \end{pmatrix}U^{-1}=
   \begin{pmatrix}
     \frac{\tilde{h}}{2} & \tilde{f} \\
     \tilde{e} & -\frac{\tilde{h}}{2}
  \end{pmatrix}.
 \end{equation}
Solving for $U$ gives
\[
U_{(123)} =\begin{pmatrix}  \sqrt{\frac{a_1a_3}{a_{12}a_{23}}} &   -\sqrt{\frac{a_2a_{123}}{a_{12}a_{23}}} \\  \sqrt{\frac{a_2a_{123}}{a_{12}a_{23}}}&  \sqrt{\frac{a_1a_3}{a_{12}a_{23}}} \end{pmatrix}.
\]
A straightforward calculation shows that $U_{(123)}$ is indeed an orthogonal matrix. It represents a rotation with angle $\theta_{1,2,3}$ defined by
\[
 \cos(\theta_{1,2,3})= \sqrt{\frac{a_1a_3}{a_{12}a_{23}}}.
\]

\end{example}

\begin{example}\label{example3}
Consider in the algebra $\mathcal{R}_4(\mathfrak{h})$ the Lie algebra $\mathfrak{sl}_3$ constructed from the labelling Abelian algebra $\{Q_{12},Q_{123}\}$.
\begin{align*}
 e_{12}&:=\lambda_1([Q_{12},[Q_{12},Q_{23}]]+a_{12}[Q_{12},Q_{23}]), \\
 e_{21}&:=\lambda_1([Q_{12},[Q_{12},Q_{23}]]-a_{12}[Q_{12},Q_{23}]), \\
 h_1&:=[e_{12},e_{21}],\\
  e_{23}&:=\lambda_2([Q_{123},[Q_{123},Q_{34}]]+a_{123}[Q_{123},Q_{34}]), \\
 e_{32}&:=\lambda_2([Q_{123},[Q_{123},Q_{34}]]-a_{123}[Q_{123},Q_{34}]),\\
 h_2&:=[e_{23},e_{32}], \\
 e_{13}&:=[e_{12},e_{23}], \\
 e_{31}&:=[e_{32},e_{21}].
\end{align*}
When performing the permutation  $(1\rightarrow 2\rightarrow 3 \rightarrow 1)$ we find new generators. These generators are eigenvectors of the labelling Abelian algebra $\{Q_{23},Q_{123} \}$. We will not explicitly express the new generators as linear combinations but we will give $U \in \textup{SL}_3$. To find $U$ we solve the following equation
\begin{equation}\label{SL3}
U 	\begin{pmatrix}	
		 \frac{2h_1+h_2}{3} & e_{21} &e_{31} \\
		e_{12} & \frac{-h_1+h_2}{3} & e_{32} \\
		e_{13} & e_{23} & \frac{-h_1-2h_2}{3}
	\end{pmatrix}U^{-1} =
	\begin{pmatrix}	
		 \frac{2\tilde{h}_1+\tilde{h}_2}{3} & \tilde{e}_{21} &\tilde{e}_{31} \\
		\tilde{e}_{12} & \frac{-\tilde{h}_1+\tilde{h}_2}{3} & \tilde{e}_{32} \\
		\tilde{e}_{13} & \tilde{e}_{23} & \frac{-\tilde{h}_1-2\tilde{h}_2}{3}
	\end{pmatrix}.
\end{equation}
For the given permutation  we find
\[
 U_{(123)}=
\begin{pmatrix}
   \sqrt{\frac{a_1a_3}{a_{12}a_{23}}} &   -\sqrt{\frac{a_2a_{123}}{a_{12}a_{23}}} & 0\\ 
    \sqrt{\frac{a_2a_{123}}{a_{12}a_{23}}}&  \sqrt{\frac{a_1a_3}{a_{12}a_{23}}} & 0 \\
    0 &0 & 1
 \end{pmatrix}.
\]
The $2\times 2$ in the upper right corner is exactly $U_{(123)}$ in the $n=2$ case. If  $U_{(123)}$ acts on a three dimensional space it represents a planar rotation over an angle $\theta_{1,2,3}$. We will denote this matrix alternatively by $R_{x_1x_2}(\theta_{1,2,3}):=U_{(123)}$. The $x_1x_2$ index represents the plane that is being rotated. 
This overlap can be represented by a swap as in Proposition \ref{connectiongraph}:
\begin{center}
\begin{tikzpicture}
\filldraw 	(0,0) circle (2pt) node[anchor=north] {1}
		(0.5,0.8) circle (2pt) node[anchor=east] {12}
		(1,1.6) circle (2pt)node[anchor=east]{123}
		(1,0) circle (2pt) node[anchor=north]{2}
		(2,0) circle (2pt) node[anchor=north]{3}
		(1.5,2.4) circle (2pt) node[anchor=south] {1234}
		(3,0) circle (2pt) node[anchor=north]{4};
\draw (0,0)--(1.5,2.4);
\draw (0.5,0.8)--(1,0);
\draw (1,1.6)--(2,0);
\draw (1.5,2.4)--(3,0);
\draw (1.5,-1)node {$\mathcal{Y}_{\mathcal{A}}=\{Q_{12},Q_{123}\}$};

\draw(3.5,1.2) node{$\longrightarrow$};

\filldraw 	(4,0) circle (2pt) node[anchor=north] {1}
		(5.5,0.8) circle (2pt) node[anchor=east] {23}
		(5,1.6) circle (2pt)node[anchor=east]{123}
		(5,0) circle (2pt) node[anchor=north]{2}
		(6,0) circle (2pt) node[anchor=north]{3}
		(5.5,2.4) circle (2pt) node[anchor=south] {1234}
		(7,0) circle (2pt) node[anchor=north]{4};
\draw (4,0)--(5.5,2.4);
\draw (5.5,0.8)--(5,0);
\draw (5,1.6)--(6,0);
\draw (5.5,2.4)--(7,0);
\draw (5.5,-1)node {$\mathcal{Y}_{\mathcal{A}}=\{Q_{23},Q_{123}\}$};
\end{tikzpicture}
\end{center}
\end{example}

\begin{example}\label{example4}
Consider in the algebra $\mathcal{R}_4(\mathfrak{h})$ again the Lie algebra $\mathfrak{sl}_3$ constructed from the labelling Abelian algebra $\mathcal{Y}_1:=\{Q_{12},Q_{123}\}$.  The second labelling Abelian algebra we consider is $\mathcal{Y}_2:=\{ Q_{12},Q_{34} \}$. As before we can represent this overlap by a swap as in Proposition \ref{connectiongraph}. 
\begin{center}
\begin{tikzpicture}
\filldraw 	(0,0) circle (2pt) node[anchor=north] {1}
		(0.5,0.8) circle (2pt) node[anchor=east] {12}
		(1,1.6) circle (2pt)node[anchor=east]{123}
		(1,0) circle (2pt) node[anchor=north]{2}
		(2,0) circle (2pt) node[anchor=north]{3}
		(1.5,2.4) circle (2pt) node[anchor=south] {1234}
		(3,0) circle (2pt) node[anchor=north]{4};
\draw (0,0)--(1.5,2.4);
\draw (0.5,0.8)--(1,0);
\draw (1,1.6)--(2,0);
\draw (1.5,2.4)--(3,0);
\draw (1.5,-1)node {$\mathcal{Y}_{\mathcal{A}}=\{Q_{12},Q_{123}\}$};

\draw(3.5,1.2) node{$\longrightarrow$};

\filldraw 	(4,0) circle (2pt) node[anchor=north] {1}
		(4.5,0.8) circle (2pt) node[anchor=east] {12}
		(6.5,0.8) circle (2pt)node[anchor=east]{34}
		(5,0) circle (2pt) node[anchor=north]{2}
		(6,0) circle (2pt) node[anchor=north]{3}
		(5.5,2.4) circle (2pt) node[anchor=south] {1234}
		(7,0) circle (2pt) node[anchor=north]{4};
\draw (4,0)--(5.5,2.4);
\draw (4.5,0.8)--(5,0);
\draw (6.5,0.8)--(6,0);
\draw (5.5,2.4)--(7,0);
\draw (5.5,-1)node {$\mathcal{Y}_{\mathcal{A}}=\{Q_{12},Q_{34}\}$};
\end{tikzpicture}
\end{center}

The generators corresponding to the second Abelian algebra are:
\begin{align*}
 \tilde{e}_{12}&:=\lambda_{12}([Q_{12},[Q_{12},Q_{234}]]+a_{12}[Q_{12},Q_{234}]), \\
  \tilde{e}_{21}&:=\lambda_{12}([Q_{12},[Q_{12},Q_{234}]]-a_{12}[Q_{12},Q_{234}]), \\
 \tilde{h}_1&:=[e_{12},e_{21}],\\
 \tilde{e}_{23}&:=\lambda_{34}([Q_{34},[Q_{34},Q_{124}]]+a_{34}[Q_{34},Q_{124}]), \\
  \tilde{e}_{32}&:=\lambda_{34}([Q_{34},[Q_{34},Q_{124}]]-a_{34}[Q_{34},Q_{124}]),\\
 \tilde{h}_2&:=[e_{23},e_{32}] ,\\
  \tilde{e}_{13}&:=[e_{12},e_{23}] ,\\
  \tilde{e}_{31}&:=[e_{32},e_{21}].
\end{align*}
Solving equation \eqref{SL3} results in

\[
U_{\mathcal{Y}_1\mathcal{Y}_2}:= \begin{pmatrix}
     1 &0 & 0 \\
     0 & \sqrt{\frac{a_3a_{1234}}{a_{123}a_{34}}} & -\sqrt{ \frac{a_4a_{12}}{a_{123}a_{34}}} \\
     0 &  \sqrt{ \frac{a_4a_{12}}{a_{123}a_{34}}} & \sqrt{\frac{a_3a_{1234}}{a_{123}a_{34}}}
 \end{pmatrix}.
\]
This is again an orthogonal matrix. Similarly to the previous example one can see that the lower-right $2\times 2$ matrix is a rotation matrix. The matrix $U_{\mathcal{Y}_1\mathcal{Y}_2}$ represents a planar rotation matrtx over an angle $\pi/2-\theta_{12,3,4}$ but in a different plane. We denote this matrix by
\[
 R_{x_2x_3}(\theta_{12,3,4}-\pi/2).
\]
\end{example}

Given two labelling Abelian algebras, we are able to construct an isomorphism of $\mathfrak{sl}_{3}$ and its corresponding rotation matrix. The previous examples show that if one labelling Abelian algebra is obtained by a swap on the other, the resulting rotation matrices are planar. Twists on the other hand give reflections.  This leads to an alternative way to construct the rotation matrix of an isomorphism related to two labelling Abelian algebras. We choose a path in the connection graph between the vertices related to the labelling Abelian algebras. Every step along the path in the connection graph can be represented by a swap as in Proposition \ref{connectiongraph}. Each of these swaps leads to a planar rotation. The final rotation matrix will be the product of each planar rotation found along the path.
For example, consider these two labelling Abelian algebras:  $\langle Q_{12},Q_{34} \rangle$ and $\langle Q_{13},Q_{24} \rangle$. The intermediate bases are the following:
 \[
 \langle Q_{12},Q_{34} \rangle \rightarrow \langle Q_{12},Q_{123} \rangle \rightarrow \langle Q_{13},Q_{123} \rangle \rightarrow \langle Q_{13},Q_{24} \rangle.
 \]
For each step we construct the corresponding rotation matrix. This results in 
\[
R:=  R_{x_2x_3}(\pi/2-\theta_{12,3,4})R_{x_1x_2}(-\theta_{1,2,3})R_{x_2x_3}(\theta_{13,2,4}-\pi/2).
\]
It is known that every rotation matrix $R \in \textup{SO}_{3}$ can be written as the following product of planar rotations:
\[
 R_{x_2x_3}(\theta_1)R_{x_1x_2}(\theta_2)R_{x_2x_3}(\theta_3).
\]
The angles $\theta_1$, $\theta_2$ and $\theta_3$ are the so-called Euler angles. By choosing the right path through the connection graph it is possible to give the decomposition of the rotation matrix in planar matrices.
The Euler angles also show up in the in the overlap coefficients between the given bases. The overlap coefficients \eqref{overlapsl4} are multivariate Krawtchouk polynomials of Griffiths type depending on three parameters $R_h^{13,2,4}$, $R_h^{1,2,3}$  and $R_h^{4,3,12}$. The relationship between these parameters and the angles is the following:
\[
 R_{h}^{K,L,M}=\cos(2\theta_{K,L,M}).
\]
This relation between multivariate Krawtchouk polynomials and rotations had been discussed earlier in  \cite{GVZ5}.

We generalize the previous analysis to any $n$.  To any pair of labelling Abelian algebras we are able to construct overlap coefficients and an isomorphism of  $\mathfrak{sl}_{n-1}$ represented by a rotation matrix in $\textup{SO}_{n-1}$. Any rotation matrix in $\textup{SO}_{n-1}$ can be written as the product of $\frac{(n-1)(n-2)}{2}$ planar rotations. In the same way the overlap coefficients are multivariable Krawtchouk polynomials of Griffiths type depending on $\frac{(n-1)(n-2)}{2}$ parameters. By Proposition \ref{connectiongraph} it takes at most $\frac{(n-1)(n-2)}{2}$ steps through the connection graph. Each step provides us with a parameter and a planar rotation matrix leading to the right number of parameters and planar rotations. We wil showcase this with an example that also shows the link with Krawtchouk polynomials of Tratnik type:
\begin{example}
Consider the labelling Abelian algebras 
\[
  \mathcal{Y}_{initial}:=\{ Q_{[k]}\,|\, 1<k<n\} , \quad \mathcal{Y}_{final}:=\{ Q_{[k\dots n]}\,|\, 1< k<n\}.
\]
Between these two labelling Abelian algebras, it will take $\frac{(n-1)(n-2)}{2}$ steps to find the overlap. \newline
\underline{Step $1$:} Replace $Q_{12}$ by $Q_{23}$ in $\mathcal{Y}_{initial}$. The overlap coefficients are univariate Krawtchouk polynomials:
\[
B_{s^{1}_1 s^{2}_1}(R_h^{1,2,3},C^{1,2,3}).
\]
The vector index $ s^{1}$ labels the basis diagonalized by $\mathcal{Y}_{initial}$. The index $s^{2}$ labels the basis diagonalized by the new labelling Abelian algebra.  The related rotation matrix is given by
\[
R_{x_1x_2}(\theta_{1,2,3}).
\] 
\underline{Step $2$:} We perform two steps: Replace $Q_{123}$ by $Q_{234}$ and then $Q_{23}$ by $Q_{34}$. The overlap coefficients are bivariate Krawtchouk polynomials of Tratnik type:
\[
  B_{s^{2}_1 s^{3}_1}(R_h^{1,23,4},C^{1,23,4})B_{s^{2}_2 s^{3}_2}(R_h^{2,3,4},C^{2,3,4}).
\]
The related rotation matrix is given by 
\[
 R_{x_1x_2}(\theta_{2,3,4})R_{x_2x_3}(\theta_{1,23,4}).
\]
\underline{Step $k-1$:} Consider the first $k-1$ elements in the labelling Abelian algebra: 
\[
	\{Q_{k-1k},Q_{[k-2.. k]}, \dots, Q_{[2.. k]}, Q_{[k]} \}.
\]
We perform $k-1$ steps. Replace $Q_{[k]}$ by $Q_{[2.. k+1]}$. Next replace $Q_{[2 .. k]}$ by $Q_{[3 ..  k+1]}$ and so on until $Q_{k-1k}$ is replaced by $Q_{kk+1}$. These $k-1$ steps lead to overlap coefficients that are a product of $k-1$ univariate Krawtchouk polynomials. These are more specifically multivariate Krawtchouck polynomials of Tratnik type:
\[
	\prod_{l=1}^{k-1} B_{s^{k-1}_{l} s^{k}_{l}}(R_h^{l,[l+1..k],k+1},C^{l,[l+1..k],k+1}).
\]
The related rotation matrix is given by
\[
 \prod_{l=1}^{k-1} R_{x_{k-l}x_{k+1-l}}(\theta_{l,[l+1..k],k+1}).
\]
Combining all $n-2$ steps we come to the conclusion that the overlap coefficients are given by
\[
B_{\vec s^1 \vec s^{n-1}}= \sum_{m=2} ^{n-2}\sum_{s_{l}^{m}} \prod_{k=2}^{n-1} \prod_{l=1}^{k-1} B_{s^{k-1}_{l} s^{k}_{l}}(R_h^{l,[l+1..k],k+1},C^{l,[l+1..k],k+1}).
\]  
The overlap coefficients are multivariate Krawtchouk polynomials of Griffiths type which are themselves a sum of products of Krawtchouk polynomials of Tratnik type. The overlap coefficients depend on $\frac{(n-1)(n-2)}{2}$ parameters of the form $R_h^{l,[l+1..k],k+1}$.
The corresponding rotation matrix is given by a product of $\frac{(n-1)(n-2)}{2}$ rotation matrices.
\[
 \prod_{k=2}^{n-1}\prod_{l=1}^{k-1} R_{x_{k+1-l}x_{k-l}}(\theta_{l,[l+1..k],k+1})
\]

\end{example}

 \section{Conclusion}
We introduced the oscillator Racah algebra $\mathcal{R}_n(\mathfrak{h})$. We have shown how to embed $\mathfrak{sl}_{n-1}$ into the algebra $\mathcal{R}_n(\mathfrak{h})$. This connects the representation theory for both algebras. In finite irreducible representations we considered bases diagonalized by labelling Abelian algebras of $\mathcal{R}_n(\mathfrak{h})$. The overlap coefficients between a pair of bases are shown to be multivariate Krawtchouk polynomials of Tratnik or Griffiths type. Isomorphisms of $\mathfrak{sl}_{n-1}$ related to pairs of labelling Abelian algebras and their corresponding Lie group elements were constructed and their link to the overlap coefficients is explained. This has provided an explanation as to why the recoupling coefficients of the oscillator algebra and the matrix elements of the restrictions to $\textup{O}(n-1)$ of symmetric representations of $\textup{SL}(n-1)$ are generically given in terms of the multivariate Krawtchouk polynomials of Griffiths.

\section{Acknowledgements}
NC is partially supported by Agence National de la Recherche Projet AHA ANR-18-CE40-0001 and is gratefully holding a CRM-Simons professorship. WVDV thanks the Fonds Professor Frans Wuytack for supporting his research. He is also grateful for the hospitality offered by him at the CRM during his stay. The research of LV is supported in part  by a discovery grant of the Natural Science and Engineering Research Council (NSERC) of Canada.

\appendix

\section{Calculation of overlap coefficients}\label{Overlap}
Let $V$ be a finite dimensional representation of $\mathfrak{sl}_2$ and $\tilde{.}$ an automorphism of $\mathfrak{sl}_2$. The element $h$ is a Cartan generator of $\mathfrak{sl}_2$. Let $\{ \psi_k \}$ be  an eigenbasis of $h$ and $\{ \phi_s \}$ be an eigenbasis for $\tilde{h}$. The indices $k$ and $s$ run form $0$ to $N$ with $\dim(V)=N+1$. We are interested in the overlap coefficients $B_{ks}$ between these bases:
\begin{equation}\label{overlapnew}
 \phi_s=\sum_{k=0}^N B_{sk}\psi_k.
\end{equation}

 \[
    h\psi_k=\mu_k\psi_k \qquad \tilde{h}\phi_s=\nu_s\phi_s.
 \]
 The algebra $\mathfrak{sl}_2$ has algebra relations $[h,e]=2e$ and $[h,f]=-2f$ with $e$ is the raising operator and $f$ the lowering operator on $\{ \psi_k \}$:
 \[
  e\psi_k=e_{kk+1}\psi_{k+1} \quad f \psi_k=f_{kk-1} \psi_{k-1}
 \]
 and $\mu_k=\mu_0+2k$. From the algebra relation $[e,f]=h$ it follows that
 \[
f_{kk-1}e_{k-1k}- e_{kk+1}f_{k+1k}=\mu_k.
\]
Let $A_k:=e_{kk-1}f_{k-1k}$. Then we have
\[
 A_{k}-A_{k+1}=2k+\mu_0.
\]
From this we find
\[
A_k=-k(k-1)-\mu_0 k-\Omega
\]
with $\Omega \in \mathbb{R}$. We express $\tilde{h}$ as a linear combination of $h$, $e$ and $f$. 
\[
  \tilde{h}=R_h h +R_e e+ R_f f
\]
with $R_eR_f+R_h^2=1$. We have set up everything we need to find the overlap coefficients. Let the operator $\tilde{h}$ act on both sides of equality (\ref{overlapnew}).
\[
 \tilde{h}\psi_s=\sum_{k=0}^N B_{sk}(R_h h +R_e e+ R_f f) \psi_k.
\]
This gives
\[
\nu_s\phi_s=\sum_{k=0}^N B_{sk}(R_h\mu_k\psi_k+R_e e_{kk+1}\psi_{k+1}+R_f f_{kk-1}\psi_{k-1}).
\]
We expand the left hand side into the basis $\psi_k$ and we gather the terms on the right hand side:
\[
\sum_{k=0}^N \nu_s B_{sk} \psi_k=\sum_{k=0}^N (B_{sk}R_h\mu_k+B_{sk-1}R_e e_{k-1k}+B_{sk+1}R_f f_{k+1k})\psi_k.
\]
From this we find the recurrence relation
\[
\nu_s B_{sk}=B_{sk+1}R_f f_{k+1k}+ B_{sk}R_h\mu_k+B_{sk-1}R_e e_{k-1k}.
\]
We want to recognize this recurrence relation as one of the family of orthogonal polynomials. Let
\[
\tilde{B}_{ks}=\left(\prod_{t=2}^k f_{tt-1}R_f\right)B_{sk}
\]
to find
\[
 \nu_s \tilde{B}_{sk}=\tilde{B}_{sk+1}+R_h\mu_k\tilde{B}_{sk}+R_eR_f e_{k-1k}f_{k-1k}\tilde{B}_{sk-1}.
\]
We write the coefficients as polynomials in $x=\nu_s$:
\begin{equation}\label{eq1}
 x\tilde{B}_k(x)=\tilde{B}_{k+1}(x)+R_h\mu_k\tilde{B}_k(x)+R_eR_fA_k\tilde{B}_{k-1}(x).
\end{equation}
We want to compare this with the recurrence relation of the normalized Krawtchouk polynomials as defined in \cite{Koekoek&Lesky&Swarttouw-2010} :
\[
 xp_n(x)=p_{n+1}+(n(1-2r)+rN)p_n(x)+r(1-r)n(N+1-n)p_{n-1}(x)
\]
with $n=0,1, \dots , N$. Let $x=\alpha y+\beta$ and introduce $q_n(y)=p_n(\alpha y+\beta)/\alpha^n$. The polynomial $q_n(x)$ satisfies the following recurrence relation:
\[
yq_n(y)=q_{n+1}+\frac{n(1-2r)+rN-\beta}{\alpha}q_n(y)+\frac{r(1-r)n(N+1-n)}{\alpha^2}q_{n-1}(y).
\]
We retrieve equation (\ref{eq1}) if we set
\begin{equation*}
\alpha=\frac{1}{2}, \quad r=\frac{1-R_h}{2}, \quad \beta=\frac{N}{2}, \quad k=n, \quad \Omega=0, \quad \mu_0=-N.
\end{equation*}
We explicitly write down the polynomials $\tilde{B}_k(x)$.
\begin{align*}
 \tilde{B}_k(x)&=2^kp_k\left(\frac{x+N}{2}\right)\\
 			&=(-N)_k (1-R_h)^k K_k\left(\frac{x+N}{2}; \frac{1-R_h}{2},N\right) \\
 			&=(-N)_k (1-R_h)^k {}_2F_1\left(\substack{-k,-\frac{x+N}{2}\\-N}\large| \frac{2}{1-R_h}\right).
\end{align*}
The overlap coefficients are the Krawtchouk polynomials $K_k(x)$ (defined in the last line of the equation above) up to a normalization factor. 
\[
 B_{sk}=\left(\prod_{t=2}^k f_{tt-1}R_f\right)(-N)_k (1-R_h)^k K_k\left(\frac{\nu_s+N}{2}; \frac{1-R_h}{2},N\right).
\]

\end{document}